\renewcommand{\arraystretch}{1.3}
\newtheorem{theorem}{Theorem}
\newtheorem{remark}{Remark}
\newtheorem{proposition}{Proposition}
\newtheorem{corollary}{Corollary}
\newcommand{\cM}{{\cal M}}
\newcommand{\cC}{{\cal C}}
\newcommand{\cT}{{\cal T}}
\newcommand{\hy}{{\hat y}}
\newcommand{\hz}{{\hat z}}
\newcommand{\hx}{{\hat x}}
\newcommand{\hw}{{\hat w}}
\newcommand{\by}{{\bar y}}
\newcommand{\bz}{{\bar z}}
\newcommand{\bx}{{\bar x}}
\newcommand{\bw}{{\bar w}}
\newcommand{\bu}{{\bar u}}
\newcommand\hcT{{\widehat { \cal T}}}
\newcommand\hcF{{\widehat { \cal F}}}
\newcommand\tcT{\widetilde{\mathcal{T}}}
\newcommand\tcF{\widetilde{\mathcal{F}}}
\begin{document}
\title{HOT: An Efficient Halpern Accelerating Algorithm for Optimal Transport Problems}

\author{Guojun Zhang,~ Zhexuan Gu, Yancheng Yuan, Defeng Sun
\thanks{Guojun Zhang and Zhexuan Gu contribute equally to this manuscript.}
\thanks{Corresponding author: Yancheng Yuan.}
\thanks{The authors are with The Hong Kong Polytechnic University. E-mail: guojun.zhang@connect.polyu.hk, zhexuan.gu@connect.polyu.hk, yancheng.yuan@polyu.edu.hk, defeng.sun@polyu.edu.hk.}}

\markboth{}%
{}

\maketitle

\begin{bibunit}

\begin{figure*}[h]
    \centering
    \includegraphics[width=0.6\textwidth,]{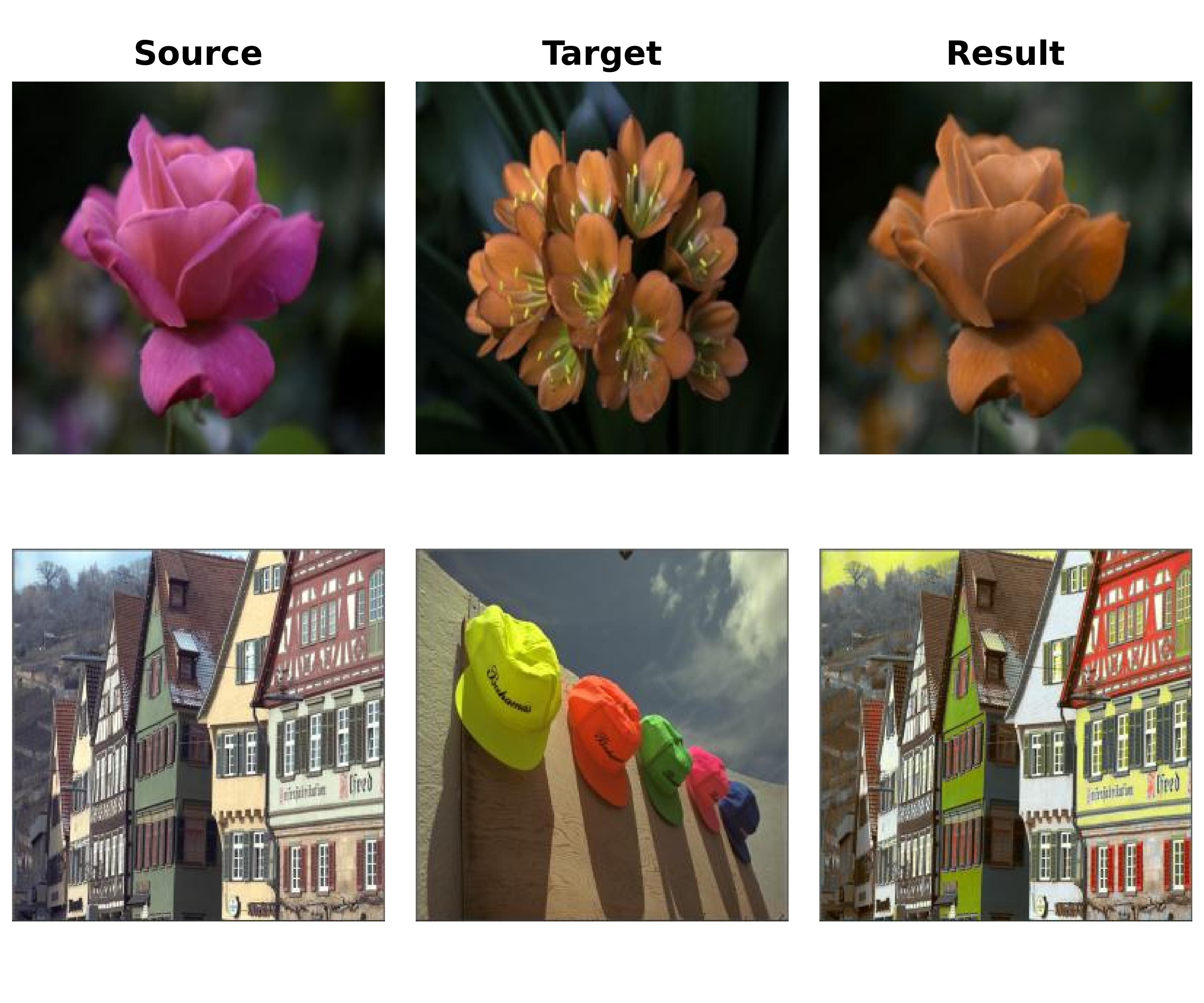}
    \caption{Selected examples of color transfer based on the reduced optimal transport model with the optimal transport plan recovered by Algorithm \ref{alg:transportplan}.}
    \label{fig:color_transfer_head}
\end{figure*}

\begin{abstract}
This paper proposes an efficient HOT algorithm for solving the optimal transport (OT) problems with finite supports. We particularly focus on an efficient implementation of the HOT algorithm for the case where the supports are in $\mathbb{R}^2$ with ground distances calculated by $L_2^2$-norm. Specifically, we design a Halpern accelerating algorithm to solve the equivalent reduced model of the discrete OT problem. Moreover, we derive a novel procedure to solve the involved linear systems in the HOT algorithm in linear time complexity. Consequently, we can obtain an $\varepsilon$-approximate solution to the optimal transport problem with $M$ supports in $O(M^{1.5}/\varepsilon)$ flops, which significantly improves the best-known computational complexity. We further propose an efficient procedure to recover an optimal transport plan for the original OT problem based on a solution to the reduced model, thereby overcoming the limitations of the reduced OT model in applications that require the transport plan. We implement the HOT algorithm in PyTorch and extensive numerical results show the superior performance of the HOT algorithm compared to existing state-of-the-art algorithms for solving the OT problems.   
\end{abstract}

\begin{IEEEkeywords}
Optimal transport, Kantorovich-Wasserstein distance, Halpern iteration, Acceleration, Computational complexity. 
\end{IEEEkeywords}

\IEEEpeerreviewmaketitle

\section{Introduction}

\IEEEPARstart{T}hE Kantorovich-Wasserstein (KW) distance has become a prime choice for measuring the similarity between two probability distributions. It has demonstrated remarkable success in various applications, including color transfer \cite{pitie2007linear,bonneel2013example,solomon2015convolutional}, texture synthesis and mixing \cite{dominitz2009texture}, registration and warping \cite{haker2004optimal}, transport-based morphometry \cite{basu2014detecting}, and hypothesis testing \cite{del1999tests}, among others. Despite its powerful geometric framework for comparing probabilities, the KW distance is computationally expensive in general \cite{villani2003topics, peyre2019computational}.  
Specifically, it requires solving an optimal transport (OT) problem, which is a (large-scale) linear programming (LP) in a discrete setting. 
Standard methods, such as the simplex method and the interior point method, suffer from high computational complexity relative to the problem size. Furthermore, these methods are difficult to parallelize, which can hardly benefit from the modern powerful graphics processing units (GPUs). Consequently, solving the LP problem of the discrete OT problem remains daunting in modern data-driven applications due to high computational and memory costs. This paper addresses these two challenges by proposing an efficient and easily parallelizable algorithm for solving an equivalent reduced model of the OT problem.
\subsection{Related work and existing challenges}
When it comes to computing the KW distance between two discrete probability distributions with $M$ supports, there are mainly two popular approaches: (i) Computing an approximated KW distance by solving the optimal transport problem with an additional entropy regularization \cite{cuturi2013sinkhorn}; (ii) Computing the KW distance via solving the corresponding LP problem \cite{orlin1988faster}. The readers can refer to \cite{peyre2019computational} and the references therein for a more detailed discussion of the algorithms for solving OT problems. Before introducing our new algorithm, we briefly discuss the challenges of the aforementioned approaches.

{\textbf{Challenges with the Entropy-regularized approach:}} Due to its scalability, the {Sinkhorn} algorithm and its improved versions have been widely adopted to compute an approximation of the KW distance in applications \cite{cuturi2013sinkhorn,lin2019efficient,dvurechensky2018computational,guminov2021combination}. In particular, the {Sinkhorn} algorithm can efficiently solve the regularized OT problem when the regularization parameter is moderate (i.e., no less than $10^{-2}$). However, a high-quality approximation of the KW distance is important for better performance in many applications, which requires solving the regularized OT problem with a small regularization parameter. Unfortunately, a small regularization parameter will usually cause numerical issues and
a slower convergence for the {Sinkhorn} algorithm.  Some stabilized and rescaling techniques \cite{schmitzer2019stabilized} have been proposed to improve the robustness of solving the regularized OT problems, but the efficiency of the stabilized algorithms is unsatisfactory compared to the Sinkhorn algorithm.

{\textbf{Challenges with the LP approach:}} 
Along this line, the interior point method \cite{pele2009fast} and the network simplex method \cite{goldberg1989network,gabow1991faster} are popular choices for obtaining solutions with high accuracy to the moderate scale LP problem. Recently, a {semismooth Newton based inexact proximal augmented Lagrangian} method \cite{li2020asymptotically} has been proposed for solving linear programming problems which can solve the OT problem as a special case. The semismooth Newton based algorithm can exploit the sparsity of the solution by the generalized Jacobian and show superior numerical performance compared to Gurobi in some examples.  However, these solvers are not applicable for solving the problem on a very large scale due to the high computational complexity. The urgent need to solve large-scale OT problems in applications inspires extensive research in designing first-order methods, such as the {Douglas-Rachford splitting algorithm \cite{mai2022fast}} and primal-dual hybrid gradient method (PDHG) \cite{zhu2008efficient,esser2010general,chambolle2011first,applegate2021practical}. Jambulapati et al. \cite{jambulapati2019direct} proposed an algorithm based on a dual-extrapolation algorithm to achieve an $\widetilde{O}({M}^2/\varepsilon)$ computational complexity bound, where $M$ is the number of supports, for obtaining an $\varepsilon$-approximate solution (in terms of objective function value)\footnote{Despite its better complexity bound, the empirical performance of this algorithm is not as efficient as other algorithms, such as accelerated gradient method \cite{dvurechensky2018computational,guminov2021combination}.}. Recently, Zhang et al. \cite{zhang2022efficient} proposed an efficient Halpern-Peaceman-Rachford (HPR) algorithm for solving the OT model and the Wasserstein barycenter problem, which can obtain an $\varepsilon$-approximate solution (in terms of the Karush-Kuhn-Tucker (KKT) residual) of the OT model in $O({M}^2/\varepsilon)$ flops.
We summarize some known complexity results for solving the OT problem in Table \ref{tab:complexity}. Readers can refer to {\cite{tupitsa2022numerical, zhang2022efficient,khamis2024scalable}} and the references therein for a more detailed discussion.


\begin{table}[ht]
  
\center
\caption{Selected known complexity results for solving OT problem (\small {$C$ represents the largest elements of the cost matrix, while $R$ denotes the distance between the initial point and the solution set.})}
\label{tab:complexity}
\renewcommand{\arraystretch}{1.5}
\resizebox{1\linewidth}{!}{%
\begin{tabular}{ccc}
\hline
Algorithm                                       & Time complexity result                    & Space complexity result \\ \hline
Sinkhorn \cite{dvurechensky2018computational}                 & $\widetilde{O}({M^2C^2}/{\varepsilon^{2}})$       & ${O}({M^2})$     \\
APDAGD \cite{dvurechensky2018computational,lin2022efficiency} & $\widetilde{O}(M^{2.5}C/\varepsilon)$             & ${O}({M^2})$     \\
Greenkhorn \cite{lin2022efficiency}             & $\widetilde{O}(M^2C^2/\varepsilon^{2})$   & ${O}({M^2})$            \\
Accelerated Sinkhorn \cite{lin2022efficiency}                 & $\widetilde{O}(M^{7/3}C^{4/3}/\varepsilon^{4/3})$ & ${O}({M^2})$     \\
AAM \cite{guminov2021combination}               & $\widetilde{O}(M^{2.5}C/\varepsilon)$     & ${O}({M^2})$            \\
Dual extrapolation \cite{jambulapati2019direct} & $\widetilde{O}(M^2C/\varepsilon)$         & ${O}({M^2})$            \\
HPD \cite{chambolle2022accelerated}             & $\widetilde{O}({M^{2.5}C}/{\varepsilon})$ & ${O}({M^2})$            \\
HPR \cite{zhang2022efficient}\footnote{Zhang, G., Yuan, Y., \& Sun, D. (2022). An Efficient HPR Algorithm for the Wasserstein Barycenter Problem with $ O ({\rm Dim (P)}/\varepsilon) $ Computational Complexity. arXiv preprint arXiv:2211.14881. (Under review at JMLR)}                   & ${O(M^{2}R}/{\varepsilon})$               & ${O}({M^2})$            \\
\textbf{HOT (Ours)}                             & $\mathbf{O(M^{1.5}R/\varepsilon)}$  & $\mathbf{O(M^{1.5})}$ \\ \hline
\end{tabular}
}
\end{table}

Beyond the challenges above in computational efficiency, all these algorithms for the original OT problem with ${M}$ supports require at least a memory cost of $O({M}^2)$. This memory cost makes it forbidden to compute the KW distance of two distributions with massive supports (i.e., the OT problem for computing the KW distance of two $512\times512$ grey images has more than $6.8\times 10^{10}$ variables). To address this issue, researchers have proposed some approximate models for the OT problem, such as the linear OT framework \cite{wang2013linear}, the sliced Wasserstein distance \cite{bonneel2011displacement}, and the approximated earth mover's distance \cite{indyk2003fast, shirdhonkar2008approximate,leeb2016holder}. While these approaches alleviate memory issues and simplify computations, they inevitably introduce a loss of exactness. This limitation has motivated further research into equivalent reduced models for the OT problem. When the ground distances between supports in $\mathbb{R}^{d}$ are calculated by $L_1$-norm, Ling and Okada proposed an equivalent reduced model with $O(dM)$ memory cost to calculate the earth mover distance (equivalent to the KW distance) \cite{ling2007efficient}. Recently, Auricchio et al. \cite{auricchio2018computing} extended the idea to the case where the ground distances between supports are calculated by $L_2^2$-norm  and proposed an equivalent reduced model with $O(dM^{\frac{d+1}{d}})$ memory cost. The authors in \cite{auricchio2018computing} adopted the Network Simplex method to solve the reduced model and demonstrated superior performance in terms of computational and memory efficiency compared to the {Sinkhorn} and the convolutional {Sinkhorn} method \cite{solomon2015convolutional,solomon2018optimal} on examples of moderate scale. Unfortunately, the efficiency becomes unsatisfactory for very large-scale problems (see Section \ref{sec: Experiment} for details). Moreover, the transport plan is not available if we solve the reduced model, which is critical for a wide class of applications, such as color transfer \cite{pitie2007linear,ferradans2014regularized}, texture synthesis \cite{dominitz2009texture}, and domain adaptation \cite{courty2016optimal}.

\subsection{Contributions}
Motivated by the recent advancements in the accelerated algorithms based on Halpern iteration \cite{sun2024accelerating,zhang2022efficient,kim2021accelerated,tran2021halpern,lieder2021convergence,yang2025accelerated}, we propose an efficient \textbf{H}alpern accelerating method for solving the reduced \textbf{OT} problem, which is abbreviated as ``HOT'' for convenience, to address the challenges in computing the KW distance with a finite number of supports. We particularly focus on an efficient implementation of the HOT algorithm for the case where the supports are in $\mathbb{R}^2$ with ground distances calculated by $L_2^2$-norm, which includes a wide class of applications as aforementioned. Specifically, HOT adopts a first-order algorithm with Halpern acceleration to solve the equivalent reduced OT model, which can obtain an $\varepsilon$-approximate solution in $O(1/\varepsilon)$ iterations \cite{sun2024accelerating,zhang2022efficient}. More importantly, we design a fast procedure for solving the subproblems with linear time complexity by fully exploiting the problem structure. This also makes the popular alternating direction method of multipliers (ADMM) \cite{glowinski1975approximation,gabay1976dual} scalable for solving the reduced OT model. Overall, our proposed HOT algorithm can compute an $\varepsilon$-approximation of the KW distance between two histograms supported on $M=m \times n$ bins within $O((m^2n + n^2m)/\varepsilon)$ flops. This is the best-known computational complexity for computing an approximate KW distance to our knowledge. Moreover, we propose an efficient algorithm to recover a {transport plan} based on the obtained solution of the reduced model, which releases the power of the reduced model in applications. We implement HOT in PyTorch and extensive numerical results will be shown in Section \ref{sec: Experiment} to demonstrate the superior and robust performance of HOT for computing the KW distance, compared to state-of-the-art algorithms, including {Sinkhorn} \cite{cuturi2013sinkhorn}, convolutional {Sinkhorn}\cite{solomon2015convolutional,solomon2018optimal}, Network Simplex method \cite{goldberg1989network,gabow1991faster}, ADMM \cite{glowinski1975approximation,gabay1976dual}, interior point method (in Gurobi). 

We summarize the main contributions of this paper as follows:
\begin{enumerate}
    \item We propose an efficient HOT algorithm for solving the reduced model of the OT problem with an attractive $O(1/\varepsilon)$ iteration complexity guarantee with respect to the KKT residual.
    \item We designed a highly efficient algorithm for solving the subproblems of the HOT algorithm with linear time complexity.
    \item We propose an efficient algorithm to recover a {transport plan} based on the obtained solution of the reduced model, which removes the restriction of the reduced model in applications requiring a {transport plan}.
    \item We implement the HOT algorithm in PyTorch, which supports both CPU and GPU computation and is user-friendly for researchers in the machine learning community.
    \item Extensive numerical testings are conducted and presented to demonstrate the efficiency of the HOT algorithm.
\end{enumerate}

The rest of the paper is organized as follows. We introduce the equivalent reduced OT model in Section \ref{sec: reduced OT model}. This section also includes an efficient procedure for recovering the {transport plan} from a solution to the reduced OT problem. The HOT algorithm and its computational complexity guarantees will be presented in Section \ref{sec: HOT-algorithm}. We present extensive numerical results in Section \ref{sec: Experiment} and conclude the paper in Section \ref{sec: conclusion}.

\textbf{Notation.} We denote the \(n\)-dimensional real Euclidean space as \(\mathbb{R}^n\) and the nonnegative orthant of \(\mathbb{R}^n\) as \(\mathbb{R}^n_{+}\). For any \(x \in \mathbb{R}^n\) and \(y \in \mathbb{R}^n\), we define \(\langle x, y \rangle := \sum_{i=1}^n x_i y_i\) and \(\|x\| := \sqrt{\sum_{i=1}^n x_i^2}\), respectively. Additionally, let \(\mathbf{1}_m\) (resp. \(\mathbf{0}_m\)) denote the \(m\)-dimensional vector with all entries being 1 (resp. 0). For a given matrix \(A \in \mathbb{R}^{m \times n}\), we denote \(A^{\top} \in \mathbb{R}^{n \times m}\) as its transpose. For a collection of matrices \(\{A_1, \ldots, A_m\}\), we denote the block diagonal matrix with diagonal blocks \(A_i\) as \(\operatorname{diag}(A_1, \ldots, A_m)\). \(A_1 \otimes A_2\) stands for the Kronecker product of matrices \(A_1\) and \(A_2\). Moreover, for a closed convex set \(C\), we denote the indicator function of \(C\) and the Euclidean projector over \(C\) as \(\delta_C\) and \(\Pi_C(x) := \arg\min_{z \in C} \|x-z\|\), respectively.

\section{Kantorovich-Wasserstein distances}
\label{sec: reduced OT model}
In this section, we first introduce an equivalent reduced model of the OT problem for computing the KW distance between two-dimensional histograms. Subsequently, we present a fast and easily implementable algorithm to reconstruct the optimal transport plan of the original OT model from a solution of this reduced model.
\subsection{An equivalent reduced model of the OT problem}
In the following discussion, we assume two-dimensional histograms for simplicity. As previously mentioned, these histograms are widely used in applications as shape and image descriptors. Without loss of generality, we adopt the following assumptions and notations:
\begin{enumerate}
    \item Histograms have supports in $M = m \times n$ bins with $m$ rows and $n$ columns;
    \item The index set for bins is defined as $\mathcal{I} = \{(i, j) \mid 1 \leq i \leq m, 1 \leq j \leq n\}$. We use $(i, j)$ to denote a bin or a node corresponding to it;
    \item $\mu^1$ and $\mu^2$ are the two histograms to be compared, where each histogram $\mu^k$ is defined as $\{\mu^k_{i, j} \mid \mu^k_{i,j} \geq 0, (i, j) \in \mathcal{I},\sum\limits_{(i,j)\in \mathcal{I}} \mu^k_{i,j} = 1 \}$ for $k = 1, 2$.
\end{enumerate} 

With these notations and assumptions, the discrete OT problem for computing the KW distance between histograms $\mu^1$ and $\mu^2$ can be defined as follows:
\begin{equation}\label{model:OT}
\begin{array}{ll}
\min\limits_{\pi} & \sum\limits_{(i,j)\in \mathcal{I}}  \sum\limits_{(k,l)\in \mathcal{I}} c_{i,j;k,l} \pi_{i,j;k,l} \\
\text{s.t.} 
&\left\{\begin{aligned}
 \quad
& \sum\limits_{(k,l)\in \mathcal{I}} \pi_{i,j;k,l} = \mu^1_{i,j},\quad \forall (i,j)\in \mathcal{I}, \\
& \sum\limits_{(i,j)\in \mathcal{I}} \pi_{i,j;k,l} = \mu^2_{k,l},\quad \forall (k,l)\in \mathcal{I}, \\
& \pi_{i,j;k,l} \geq 0, \quad \forall (i,j)\in \mathcal{I}, \ \forall (k,l)\in \mathcal{I},
\end{aligned}\right.
\end{array}
\end{equation}
where $\pi$ is the transport plan between histograms $\mu^1$ and $\mu^2$. The ground distance $c_{i,j;k,l}$ is commonly defined by the $L_p^p$ distance:
\begin{equation}\label{def:c}
c_{i,j;k,l} = \|(i,j)^{\top} - (k,l)^{\top}\|_p^p = (|i-k|^p + |j-l|^p).
\end{equation}
In this paper, we focus on the case where $p=2$. By exploiting the separable structure of the ground distance, Auricchio et al. \cite{auricchio2018computing} proposed the following equivalent model in terms of the optimal objective function value:
\begin{equation}\label{model: partite-graph}
\begin{array}{ll}
\min \limits_{f^{(1)}, f^{(2)}} & \sum\limits_{(i,j)\in \mathcal{I}} \left[ \sum\limits_{k=1}^m (k-i)^2 f_{i,k,j}^{(1)} + \sum\limits_{l=1}^n (j-l)^2 f_{k,j,l}^{(2)} \right] \\
\text{s.t.} & 

\left\{\begin{aligned}
 \quad
&\sum_{i=1}^m f_{i,k,j}^{(1)} = \sum_{l=1}^n f_{k,j,l}^{(2)},  &\forall (k,j) \in \mathcal{I},\\
&\sum_{k=1}^m f_{i,k,j}^{(1)} = \mu^1_{i,j},  &\forall (i,j) \in \mathcal{I}, \\
&\sum_{j=1}^n f_{k,j,l}^{(2)} = \mu^2_{k,l}, & \forall (k,l) \in \mathcal{I}, \\
&f_{i,k,j}^{(1)}  \geq 0, f_{k,j,l}^{(2)} \geq 0,  &\forall (i,j),(k,l) \in \mathcal{I},
\end{aligned}\right.
\end{array}
\end{equation}
where $f^{(1)}_{i,k,j}$ denotes the input flow from bin $(i,j)$ to $(k,j)$ and $f^{(2)}_{k,j,l}$ denotes the output flow from bin $(k,j)$ to $(k,l)$. Compared to formulation \eqref{model:OT}, the formulation \eqref{model: partite-graph} offers substantial computational benefits. Specifically, the reduced problem \eqref{model: partite-graph} only has \( mn^2 + m^2n \) variables, whereas the original model has \( m^2n^2 \) variables. Moreover, the reduced model remains an LP problem. Consequently, popular algorithms for LP problems can be applied to solve this reduced model, such as the network-simplex method and the interior point method. Although the computation and memory costs of these mentioned algorithms are lower for the reduced model, it remains a challenge for solving large-scale problems. In this paper, we focus on addressing the challenges by designing a fast algorithm to solve the reduced model \eqref{model: partite-graph}.

To facilitate the design of the algorithm, we reformulate the model \eqref{model: partite-graph} into the following standard form of linear programming:
\begin{equation}\label{model:standLP0}
\begin{array}{ll}
{\min \limits_{x\in \mathbb{R}^{N}}} &
\langle{c}, {x}\rangle  + \delta_{\mathbb{R}_{+}^{N}}(x)\\
         \text { s.t. } &{\hat{A}} {x}=\hat{b},
\end{array}
\end{equation}
where  
\begin{enumerate}
    \item $M_3=3M-1, N=m^2n+mn^2$;
    \item $x=[ f^{(1)}; f^{(2)}]\in \mathbb{R}^{m^2n}\times\mathbb{R}^{mn^2} $ with 
    $$
    \begin{cases}
        f^{(1)}=\{f_{i,k,j}^{(1)},\ \forall(i, j) \in \mathcal{I},\ k=1,\ldots,m\}, \\
         f^{(2)}=\{f_{k,j,l}^{(2)},\ \forall(k, l) \in \mathcal{I},\ j=1,\ldots,n\}; 
    \end{cases}
     $$  
    \item $c=[c^{1};c^{2}]\in \mathbb{R}^{m^2n}\times\mathbb{R}^{mn^2}$ with 
    $$
    \begin{cases}
        c^1=\{c_{i,k,j}^{(1)}=(k-i)^2,\ \forall(i, j) \in \mathcal{I},\ k=1,\ldots,m\}, \\
        c^2=\{c_{k,j,l}^{(2)}=(j-l)^2,\ \forall(k, l) \in \mathcal{I},\ j=1,\ldots,n\}; 
    \end{cases}
    $$
     \item $\hat{b}=[\mathbf{0}_{M};\mu^1;
\mu^2]\in \mathbb{R}^{M}\times \mathbb{R}^{M} \times \mathbb{R}^{M}$;
    \item $\hat{A}=\left[\begin{array}{cc}
      {A}_1&{A}_2\\
         A_3& \mathbf{0}   \\
         \mathbf{0}&\hat{A}_4          
    \end{array}\right]  \in \mathbb{R}^{(M_3+1)\times N}$ with 
    \begin{equation}\label{def:A1A2A3}
       \begin{cases}
      {A}_1=I_{M}\otimes \mathbf{1}_{m}^{\top}\in     \mathbb{R}^{M\times m^2n},\\   
    {A}_2=-\mathbf{1}_{n}^{\top}\otimes I_{M}\in \mathbb{R}^{M\times mn^2},\\
    A_3=I_{n}\otimes (\mathbf{1}_{m}^{\top}\otimes I_{m}) \in\mathbb{R}^{M\times m^2n},\\ 
\hat{A}_4=I_{n}\otimes (\mathbf{1}_{n}^{\top}\otimes I_{m}) \in\mathbb{R}^{M\times mn^2}. \\   
     \end{cases}  
    \end{equation}  
\end{enumerate}
For notational convenience, let $\bar{I}_{m}=[I_{m-1},\mathbf{0}_{m-1}]\in \mathbb{R}^{(m-1)\times m}$. We define 
\begin{equation}\label{Mat:A&b}
 A:=\left[\begin{array}{cc}
         A_1& A_2   \\
         A_3&\mathbf{0}   \\
         \mathbf{0}&{A}_4
    \end{array}\right] \in \mathbb{R}^{M_3\times N},\quad {b}:=[0_{M};\mu^1; \bar{I}_{M}  \mu^{2}]\in \mathbb{R}^{M_3}.  
\end{equation}
with 
\begin{equation}\label{def:A4}
 {A}_4=\operatorname{diag}\left(  \mathbf{1}_{n}^{\top}\otimes I_m,\ldots,\mathbf{1}_{n}^{\top}\otimes I_m,\mathbf{1}_{n}^{\top}\otimes\bar{I}_{m}\right)\in \mathbb{R}^{(M-1)\times mn^2}.
\end{equation}
Similar to \cite[Lemma 7.1]{dantzig2003linear}, we can obtain that $A$  defined in \eqref{Mat:A&b} has full row rank, and 
$$\{x \in \mathbb{R}^N ~|~ Ax = b\} = \{x \in \mathbb{R}^N ~|~ \hat{A}x = \hat{b}\}.$$
As a result, the linear programming problem \eqref{model:standLP0} is equivalent to 
 \begin{equation}\label{model:standLP}
\begin{array}{ll}
\min\limits_{x \in \mathbb{R}^N} & \langle{c}, {x}\rangle  + \delta_{\mathbb{R}_{+}^{N}}(x)\\
         \text { s.t. } &{A} {x}={b}.
\end{array}
\end{equation}
Furthermore, the dual problem of \eqref{model:standLP} takes the form: 
\begin{equation}\label{model:dualLP}
  \underset{ y \in \mathbb{R}^{M_3}, z \in \mathbb{R}^N}{\min} \left\{-\langle{b}, {y}\rangle+\delta_{\mathbb{R}_{+}^{N}}(z)  \mid A^{\top} y+z=c\right\}.
  \end{equation}
The KKT conditions associated with \eqref{model:standLP} and \eqref{model:dualLP} can be  given by
\begin{equation}\label{def:KKT}
A^* y+z=c, \quad A x=b, \quad \mathbb{R}_{+}^{N} \ni x \perp z \in \mathbb{R}_{+}^{N},
 \end{equation}
where $x \perp z$ means $x$ is perpendicular to $z$, i.e., $\langle x, z\rangle=0$.

\subsection{Reconstruct the transport plan from the reduced model}
The absence of the transport plan \(\pi\) makes the reduced OT model less favorable in applications where the transport plan is necessary (i.e., color transfer \cite{pitie2007linear,bonneel2013example,solomon2015convolutional}). We address this issue by proposing a fast algorithm (shown in Algorithm \ref{alg:transportplan}) to reconstruct an optimal transport plan of the original model from an optimal solution of the reduced model \eqref{model: partite-graph}.
\begin{algorithm}[H]
\footnotesize 
		\caption{\footnotesize A fast algorithm for reconstructing transport plan $\pi$ from the network flows $f^{(1)}$ and $f^{(2)}$.}\label{alg:transportplan}
		\begin{algorithmic}
			\State{\textbf{Input:} An optimal flow $(f^{(1)},f^{(2)})$ of problem \eqref{model: partite-graph}.}
   	        \State{\textbf{Output:} An optimal transport plan $\pi$ of problem \eqref{model:OT}.}
                \For{$(k,j)\in\mathcal{I}$}
                    \For{$i=1,\ldots,m$}
                        \For{$l=1,\ldots,n$}
                            \State{$\pi_{i,j;k,l}=\min\{f^{(1)}_{i,k,j},f^{(2)}_{k,j,l}\}$}
                            \State{$f^{(1)}_{i,k,j}=f^{(1)}_{i,k,j}-\pi_{i,j;k,l}$}                       \State{$f^{(2)}_{k,j,l}=f^{(2)}_{k,j,l}-\pi_{i,j;k,l}$}
                        \EndFor
                    \EndFor
                \EndFor
                
		\end{algorithmic}
	\end{algorithm} 

The following proposition shows that the output of Algorithm \ref{alg:transportplan} is an optimal transport plan for the original OT model.
\begin{proposition}
Given an optimal solution \((f^{(1)},f^{(2)})\) to problem \eqref{model: partite-graph}, the output \(\pi\) of Algorithm \ref{alg:transportplan} is an optimal solution to the optimal transport problem \eqref{model:OT}.
\end{proposition}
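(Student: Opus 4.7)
The plan is to prove the proposition by establishing that the output $\pi$ is both feasible for the original OT problem \eqref{model:OT} and attains the reduced model's optimal objective value; since \eqref{model: partite-graph} is equivalent to \eqref{model:OT} in optimal objective value as recalled in the paper, optimality of $\pi$ for \eqref{model:OT} will then follow immediately. The whole argument reduces to analyzing what Algorithm \ref{alg:transportplan} does locally at each intermediate node $(k,j)\in\mathcal{I}$, because the outer loop touches the disjoint slices $\{f^{(1)}_{i,k,j}\}_{i=1}^m$ and $\{f^{(2)}_{k,j,l}\}_{l=1}^n$ and therefore decouples completely across $(k,j)$.

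First I would isolate the per-node subroutine. For fixed $(k,j)$, let $R_i := f^{(1)}_{i,k,j}$ and $C_l := f^{(2)}_{k,j,l}$ denote the intended row and column marginals of the block $\{\pi_{i,j;k,l}\}_{i,l}$. The flow-conservation constraint of \eqref{model: partite-graph} at node $(k,j)$ gives $\sum_i R_i = \sum_l C_l$, and the greedy $\min$-assignment with in-place decrement, iterated in lexicographic $(i,l)$ order, is precisely the classical northwest corner rule. I would prove by induction on $i$ the invariant that at the start of the $i$-th outer iteration the exhausted columns form a prefix $\{1,\ldots,l^\star\}$, the residual masses satisfy $\sum_{i'\ge i}\tilde R_{i'}=\sum_l \tilde C_l$, and by the end of row $i$ the row mass has been fully distributed, i.e., $\tilde R_i=0$. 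This immediately yields $\pi_{i,j;k,l}\ge 0$ together with the marginal identities $\sum_l \pi_{i,j;k,l}=R_i$ and $\sum_i \pi_{i,j;k,l}=C_l$.

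With these per-node identities, global feasibility for \eqref{model:OT} follows by combining with the remaining two constraints of \eqref{model: partite-graph}: for each $(i,j)$ one has $\sum_{k,l}\pi_{i,j;k,l}=\sum_k f^{(1)}_{i,k,j}=\mu^1_{i,j}$, and symmetrically $\sum_{i,j}\pi_{i,j;k,l}=\sum_j f^{(2)}_{k,j,l}=\mu^2_{k,l}$. For the objective, the separable form $c_{i,j;k,l}=(k-i)^2+(j-l)^2$ lets me split the cost and apply the marginal identities in each block to obtain
\begin{equation*}
\sum_{(i,j),(k,l)} c_{i,j;k,l}\,\pi_{i,j;k,l} = \sum_{i,k,j}(k-i)^2 f^{(1)}_{i,k,j} + \sum_{k,j,l}(j-l)^2 f^{(2)}_{k,j,l},
\end{equation*}
which is exactly the optimal objective value of \eqref{model: partite-graph}, and hence of \eqref{model:OT} by equivalence.

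The main obstacle is the per-node correctness step, specifically the subtlety that the inner $l$-loop traverses the full range $l=1,\ldots,n$ without an early exit when the current row becomes exhausted. I need to verify carefully that the ``wasted'' iterations (those where the current $\min$ evaluates to zero) preserve the inductive invariant, and that the $n$ inner iterations are always sufficient to deplete $R_i$. Both points follow from the prefix-exhaustion invariant together with the balance identity $\sum_i R_i=\sum_l C_l$, which guarantees that at the start of each row the available column capacity weakly exceeds $R_i$; once this technical point is secured, the remaining steps are pure bookkeeping.
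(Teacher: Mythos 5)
Your proposal is correct and follows essentially the same route as the paper: establish nonnegativity and the per-node marginal identities $\sum_l \pi_{i,j;k,l}=f^{(1)}_{i,k,j}$ and $\sum_i \pi_{i,j;k,l}=f^{(2)}_{k,j,l}$, deduce feasibility from the reduced model's constraints, match the objective via the separable cost, and conclude by the equivalence of optimal values. The only difference is that you justify the marginal identities more carefully (identifying the inner loops as the northwest corner rule and proving the prefix-exhaustion invariant), whereas the paper asserts them directly from the balance condition $\sum_i f^{(1)}_{i,k,j}=\sum_l f^{(2)}_{k,j,l}$.
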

\begin{proof}
Since \((f^{(1)},f^{(2)})\) is an optimal solution to problem \eqref{model: partite-graph}, we have
\[
f_{i,k,j}^{(1)} \geq 0, \quad f_{k,j,l}^{(2)} \geq 0, \quad \forall (i,j) \in \mathcal{I}, \quad \forall (k,l) \in \mathcal{I},
\]
which implies 
\begin{equation}\label{eq:nonnegative-Pi}
\pi_{i,j;k,l} \geq 0, \quad \forall (i,j), \quad \forall (k,l) \in \mathcal{I}.
\end{equation} 

Fix \( i,j,k \). For convenience, let \( f^{(1)-l}_{i,k,j} \) denote the value of \( f^{(1)}_{i,k,j} \) at the \( l \)-th iteration before updating. According to the update formula of \( f^{(1)}_{i,k,j} \), there must exist an index \( 1 \leq l^* \leq n \) such that \( f^{(1)-l^*}_{i,k,j} \leq f_{k,j,l^*}^{(2)} \). Otherwise, we would have  
\[
f^{(1)}_{i, k, j} > \sum_{l=1}^{n} f^{(2)}_{k, j, l},  
\]
which contradicts the feasibility of \( (f^{(1)},f^{(2)}) \):  
\[
f^{(1)}_{i,k,j} \leq \sum_{i=1}^{m} f_{i,k,j}^{(1)} = \sum_{l=1}^{n} f_{k,j,l}^{(2)}.
\]
Therefore, after \( n \) iterations, the final value of \( f^{(1)}_{i,k,j} \) must be zero, which leads to  
\begin{equation}\label{eq:Pi-1}
\sum_{l=1}^{n} \pi_{i,j;k,l} = f^{(1)}_{i,k,j}.
\end{equation}  

Similarly, by fixing \(j,k,l\), we also obtain  
\begin{equation}\label{eq:Pi-2}
\sum_{i=1}^{m} \pi_{i,j;k,l} = f^{(2)}_{k,j,l}.
\end{equation}
Hence, according to the constraints in problem \eqref{model: partite-graph}, we have 
\[
\begin{cases}
\sum\limits_{(k,l)\in \mathcal{I}} \pi_{i,j;k,l} = \sum\limits_{k=1}^m f_{i,k,j}^{(1)} = \mu^1_{i,j},\\
\sum\limits_{(i,j)\in \mathcal{I}} \pi_{i,j;k,l} = \sum\limits_{j=1}^n f_{k,j,l}^{(2)} = \mu^2_{k,l},
\end{cases}
\]
which, together with \eqref{eq:nonnegative-Pi}, shows that \(\pi\) is a feasible solution to problem \eqref{model:OT}. Furthermore, from \eqref{eq:Pi-1}, \eqref{eq:Pi-2}, and the definition of \(c\) in \eqref{def:c}, we can obtain 
\[
\begin{array}{ll}
&\sum\limits_{((i,j),(k,l))} c_{i,j;k,l} \pi_{i,j;k,l}\\ 
= & \sum\limits_{(i,j)\in \mathcal{I}} [ \sum\limits_{k=1}^m (k-i)^2 f_{i,k,j}^{(1)} + \sum\limits_{l=1}^n (j-l)^2 f_{k,j,l}^{(2)} ].
\end{array}
\]
According to \cite[Theorem 1]{auricchio2018computing}, we know that the optimal objective function values of problems \eqref{model:OT} and \eqref{model: partite-graph} are equivalent. Therefore, \(\pi\) is an optimal solution to problem \eqref{model:OT}.
\end{proof}

\begin{remark}
The worst-case computational complexity of reconstructing the transport plan via  Algorithm~\ref{alg:transportplan} is $3M^2$. In practice, we can efficiently parallelize the $(k, j)$ loop to leverage the significant benefits of GPU acceleration, enabling an efficient reconstruction of the transport plan from a solution to the reduced OT model.  Furthermore, in many applications, such as image retrieval \cite{indyk2003fast} and shape matching \cite{ling2007efficient}, only the KW distance is required. In these scenarios, the reconstruction of the transport plan is not necessary.
\end{remark}

\section{A Halpern accelerating algorithm for solving OT problem}
\label{sec: HOT-algorithm}
In this section, we first introduce an efficient Halpern accelerating method for solving problem \eqref{model:dualLP}, which includes the equivalent reduced OT problem \eqref{model:standLP} as a special case. Subsequently, we present an efficient implementation of the proposed algorithm by designing a novel procedure to solve the involved linear system in linear time complexity.
\subsection{HOT: A Halpern accelerating method for solving OT problem}
Given $\sigma>0$, the augmented Lagrange function corresponding to the dual problem \eqref{model:dualLP} is defined by, for any $(y,z,x)\in \mathbb{R}^{M_3} \times  \mathbb{R}^{N} \times  \mathbb{R}^{N}$,
$$
L_\sigma(y, z ; x):=-\langle{b}, {y}\rangle+\delta_{\mathbb{R}_{+}^{N}}(z) +\frac{\sigma}{2}\|A^{\top} y+z-c+\frac{1}{\sigma}x\|^2 -\frac{1}{\sigma}\|x\|^2.
$$
For ease of notation, denote $w:= (y, z, x)$. A fast Halpern accelerating method \cite{sun2024accelerating,halpern1967fixed} for solving OT problems is presented in Algorithm \ref{alg:HOT}. A detailed derivation of the algorithm in its current form and more discussions can be found in \cite{sun2024accelerating} and the references therein.
\begin{algorithm}[ht]
		\caption{HOT: A Halpern accelerating method for solving the OT problem \eqref{model:dualLP}.}
		\label{alg:HOT}
		\begin{algorithmic}[1]
			\State {Input: Choose an initial point $w^{0}=(y^{0}, z^{0}, x^{0})\in \mathbb{R}^{M_3} \times  \mathbb{R}^{N} \times  \mathbb{R}^{N}$. Set parameters $\sigma > 0$. For $k=0,1, \ldots,$ perform the following steps in each iteration.}          
			\State {Step 1. $\bar{y}^{k}=\underset{y \in \mathbb{Y}}{\arg \min }\left\{L_\sigma\left(y, {z}^{k} ; {x}^{k}\right)\right \}$.}
			\State{Step 2. $\bar{x}^{k}={x}^k+\sigma (A^{\top}\bar{y}^{k}+{z}^{k}-c)$.}
   		\State {Step 3. $\bar{z}^{k}=\underset{z \in \mathbb{Z}}{\arg \min }\left\{L_\sigma\left(\bar{y}^k, z ; \bar{x}^k\right)\right\}$.}     
			\State {Step 4. $w^{k+1}= \frac{1}{k+2}w^0 +  \frac{k+1}{k+2}(2\bar{w}^{k}-{w}^{k})$.}
		\end{algorithmic}
	\end{algorithm}
 
Note that Step 4 in Algorithm \ref{alg:HOT} is from the Halpern iteration with a stepsize of $\frac{1}{k+2}$. Without Step 4, the HOT algorithm reduces to the ADMM with a unit step size. According to \cite[Corollary 3.5]{sun2024accelerating}, we can obtain the global convergence of the HOT algorithm in the following proposition. The proof can be found in Appendix \ref{proof-prop2}. 

\begin{proposition}\label{prop:global-convergence}
The sequence $\{\bar{w}^k\}=$ $\{(\bar{y}^k, \bar{z}^k, \bar{x}^k)\}$ generated by the HOT algorithm in Algorithm \ref{alg:HOT} converges to the point $w^*=\left(y^*, z^*, x^*\right)$, where $\left(y^*, z^*\right)$ is a solution to problem \eqref{model:dualLP} and $x^*$ is a solution to problem \eqref{model:standLP}.
\end{proposition}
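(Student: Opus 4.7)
The plan is to recognize the HOT iteration as a Halpern acceleration of an underlying ADMM-type fixed-point operator, and then invoke the standard convergence theory for Halpern iterations of nonexpansive operators. Specifically, I would first observe that if Step 4 is replaced by the identity update $w^{k+1} = \bar{w}^k$, the scheme $w \mapsto \bar{w}$ defined by Steps 1--3 is exactly the (semi-proximal) ADMM with unit dual step size applied to \eqref{model:dualLP}. It is well known that this ADMM is equivalent to Douglas--Rachford splitting applied to a maximal monotone inclusion encoding the KKT system \eqref{def:KKT}, so the update operator $T: w \mapsto 2\bar{w} - w$ is nonexpansive (in fact, the ADMM map $w \mapsto \bar{w}$ is firmly nonexpansive) on $\mathbb{R}^{M_3}\times \mathbb{R}^{N}\times \mathbb{R}^{N}$.

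Next, I would verify that the fixed-point set of this operator is nonempty and coincides with the KKT solution set. Nonemptiness follows from the existence of primal-dual solutions to the linear program \eqref{model:standLP}--\eqref{model:dualLP}, which in turn follows from standard LP duality together with the fact (established just before \eqref{model:standLP}) that the feasible set of \eqref{model:standLP} is nonempty and the objective is bounded below on it. The correspondence between fixed points of the DR operator and KKT points of \eqref{def:KKT} is the classical primal-dual characterization for ADMM/DR splittings.

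With these ingredients in place, Step 4 can be rewritten as the Halpern iteration
\begin{equation*}
w^{k+1} \;=\; \tfrac{1}{k+2}\, w^{0} \;+\; \tfrac{k+1}{k+2}\, T(w^{k}),
\end{equation*}
so \cite[Corollary 3.5]{sun2024accelerating} (or equivalently the standard Halpern convergence theorem for nonexpansive maps with a nonempty fixed-point set) yields that $\{w^k\}$ converges to some fixed point $w^\star = (y^\star, z^\star, x^\star)$ of $T$. Because the ADMM map is continuous at $w^\star$, the auxiliary sequence $\bar{w}^k = \tfrac{1}{2}(w^k + T(w^k))$ converges to the same limit $w^\star$. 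The KKT characterization of fixed points of $T$ then gives that $(y^\star, z^\star)$ solves \eqref{model:dualLP} and $x^\star$ solves \eqref{model:standLP}.

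The main obstacle I expect is justifying that the subproblems in Steps 1 and 3 are well-defined so that $T$ is a genuine single-valued nonexpansive operator. Step 3 is a separable projection onto $\mathbb{R}^N_+$ after completing the square and therefore is always uniquely solvable. Step 1 requires solving a linear system whose coefficient matrix is (a multiple of) $AA^\top$; since $A$ defined in \eqref{Mat:A&b} has full row rank, this system has a unique solution and Step 1 is well-defined. Once this is checked, the remainder reduces to citing the Halpern-iteration convergence machinery of \cite{sun2024accelerating}, so no further delicate estimates are needed.
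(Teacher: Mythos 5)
Your proposal is correct and follows essentially the same route as the paper, which proves this proposition simply by invoking \cite[Corollary 3.5]{sun2024accelerating}; you cite the same result and additionally spell out the standard supporting facts (the Steps 1--3 map is the unit-step ADMM, its reflected operator is nonexpansive with a nonempty fixed-point set by LP duality, and the subproblems are well-posed since $A$ has full row rank). The only minor imprecision is that the nonexpansiveness holds in a $\sigma$-weighted (semi-)norm on the $(z,x)$ variables rather than the plain Euclidean norm on all of $(y,z,x)$, but this is exactly what the cited corollary accounts for.
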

 Next, we analyze the iteration complexity of the HOT algorithm for obtaining an $\varepsilon$-approximate solution, where an appropriate measure for the quality of the solution is crucial. In this paper, we consider the residual mapping associated with the KKT system \eqref{def:KKT}:
$$
\mathcal{R}(w)=\left(\begin{array}{c}
b-Ax \\
z-\Pi_{\mathbb{R}_{+}^{N}}(z-x) \\
c-A^{\top}y- z
\end{array}\right )
$$
for any $w=(y,z,x)\in \mathbb{R}^{M_3} \times  \mathbb{R}^{N} \times  \mathbb{R}^{N}$. Note that $\mathcal{R}(w^*) = 0$ is equivalent to the facts that $x^* \in \mathbb{R}^N$ and $(y^*, z^*) \in \mathbb{R}^{M_3}\times \mathbb{R}^N$ are the solution to problems \eqref{model:standLP} and \eqref{model:dualLP}, respectively. The KKT residual $\|\mathcal{R}(\cdot)\|$ is a commonly used and practical measure for the quality of the approximation solution to \eqref{model:standLP}. It follows from \cite[Theorem 3.7]{sun2024accelerating} that the HOT algorithm enjoys an appealing $O(1/k)$ nonergodic convergence rate in terms of the KKT residual for solving \eqref{model:standLP}, which is summarized in the following proposition. The proof can be found in Appendix \ref{proof-prop-3}.
\begin{proposition}\label{prop:complexity}
   Let $\{(\bar{y}^{k},\bar{z}^{k},\bar{x}^{k})\}$ be the sequence generated by Algorithm \ref{alg:HOT}, and let $w^*=(y^*,z^*,x^*)$ be the limit point of the sequence $\{(\bar{y}^{k},\bar{z}^{k},\bar{x}^{k})\}$ and $R_0=\|x^{0}-x^{*} + \sigma (z^{0}-z^{*}) \|$. For all $k \geq 0$, we have the following bounds:
\begin{equation}\label{eq: complexity-bound-KKT}
\|\mathcal{R}(\bar w^k)\| \leq \left( \frac{\sigma +1}{{\sigma}} \right) \frac{R_0}{(k+1)}
\end{equation}	
and

\begin{equation*}
-\|z^{*}\|\frac{R_0}{ (k+1)} \leq \langle c ,\bx^k -x^{*} \rangle \leq  (\sigma\|z^{*}\|+R_0)\frac{R_0}{\sigma (k+1)} .
\end{equation*}
\end{proposition}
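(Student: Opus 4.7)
The plan is to derive the bound by recognizing that HOT is a Halpern iteration applied to the Douglas--Rachford (DR) operator associated with the dual problem \eqref{model:dualLP}, and then combining the sharp $O(1/k)$ Halpern residual estimate with a constant-sensitive conversion from the fixed-point residual to the KKT residual.

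First, I would identify a firmly nonexpansive operator $T$ acting on the composite variable $x + \sigma z$ (with the associated $y$-block implicit) such that the inner update $w^k \mapsto \bar{w}^k$ produced by Steps~1--3 of Algorithm~\ref{alg:HOT} coincides exactly with $\bar{w}^k = T(w^k)$, and such that $\operatorname{Fix}(T)$ is in one-to-one correspondence with the primal--dual solution set of \eqref{model:standLP}--\eqref{model:dualLP}. This is the standard ADMM-as-DR correspondence; the appearance of $R_0 = \|x^0 - x^* + \sigma(z^0 - z^*)\|$ in the target bound already signals that $x+\sigma z$ is the correct composite variable. Step~4 then reads $w^{k+1} = \tfrac{1}{k+2} w^0 + \tfrac{k+1}{k+2} R(w^k)$ with $R := 2T - I$ nonexpansive and $\operatorname{Fix}(R) = \operatorname{Fix}(T)$, which is exactly Halpern's iteration with $\lambda_k = 1/(k+2)$.

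Next, I would invoke the sharp Halpern residual bound for nonexpansive operators in a Hilbert space (Lieder's theorem): for any $w^\star \in \operatorname{Fix}(R)$,
\[
\|w^k - R(w^k)\| \le \frac{2\|w^0 - w^\star\|}{k+1},
\]
which, together with the identity $R - I = 2(T - I)$, yields the consecutive-iterate estimate $\|w^k - \bar{w}^k\| \le R_0/(k+1)$.

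The part I expect to be the main obstacle is the final conversion from this fixed-point residual to the KKT residual $\mathcal{R}(\bar{w}^k)$, where the precise constant $(\sigma+1)/\sigma$ must emerge. The optimality of Step~1 gives $A\bar{x}^k = b$ exactly, so the first block of $\mathcal{R}(\bar{w}^k)$ vanishes. The optimality of Step~3 reads $\bar{z}^k = \Pi_{\mathbb{R}_+^N}(c - A^\top \bar{y}^k - \bar{x}^k/\sigma)$; combining this with the $\bar{x}^k$-update rewrites both the dual-feasibility residual $c - A^\top \bar{y}^k - \bar{z}^k$ and the complementarity residual $\bar{z}^k - \Pi_{\mathbb{R}_+^N}(\bar{z}^k - \bar{x}^k)$ as affine expressions in $\bar{z}^k - z^k$ and $\bar{x}^k - x^k$, modulo a nonexpansive projection. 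Using the firm nonexpansiveness of $\Pi_{\mathbb{R}_+^N}$ and splitting the two pieces according to their natural scaling, $\bar{x}^k - x^k$ contributes a factor $1$ while $\sigma(\bar{z}^k - z^k)$ contributes the extra $1/\sigma$, which combines to $(\sigma+1)/\sigma$ and produces \eqref{eq: complexity-bound-KKT}. The delicate point is to avoid picking up a loose factor of $2$ or an extra $\sigma$-dependence during this splitting: one must exploit that $x + \sigma z$, rather than $x$ and $z$ separately, is the quantity controlled by the Halpern step.
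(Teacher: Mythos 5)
Your proposal follows the same route as the paper, which proves this proposition simply by invoking \cite[Theorem 3.7]{sun2024accelerating}: the argument behind that theorem is exactly what you outline — Steps 1--3 form a firmly nonexpansive (DR/ADMM) map in the composite variable $x+\sigma z$, Step 4 is Halpern iteration applied to its reflection, and Lieder's bound yields $\|e^k\|\le R_0/(k+1)$ for $e^k:=(\bar x^k-x^k)+\sigma(\bar z^k-z^k)$. For the conversion you flagged as the obstacle, the clean accounting is: Step 1 gives $A\bar x^k=b$, Step 2 gives $c-A^{\top}\bar y^k-\bar z^k=-e^k/\sigma$, and the Moreau decomposition of the Step 3 projection gives $\bar z^k\ge 0$, $\bar x^k+e^k\ge 0$, $\langle\bar z^k,\bar x^k+e^k\rangle=0$, so the complementarity block equals $\Pi_{\mathbb{R}_{+}^{N}}(\bar z^k-\bar x^k-e^k)-\Pi_{\mathbb{R}_{+}^{N}}(\bar z^k-\bar x^k)$ and is bounded by $\|e^k\|$; hence the factors $1$ and $1/\sigma$ both multiply the \emph{same} composite quantity $\|e^k\|$ (not $\|\bar x^k-x^k\|$ and $\sigma\|\bar z^k-z^k\|$ separately, which the Halpern step does not control individually), giving $(1+1/\sigma)\|e^k\|\le\left(\frac{\sigma+1}{\sigma}\right)\frac{R_0}{k+1}$ as claimed.
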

\begin{remark}
Note that, without acceleration, the ADMM has an \( O(1/\sqrt{k}) \) non-ergodic rate in terms of both the objective function value gap and feasibility violations \cite{davis2016convergence,cui2016convergence}. In contrast, the HOT algorithm in Algorithm \ref{alg:HOT} achieves an \( O(1/k) \) non-ergodic convergence rate, offering significant advantages for solving large-scale OT problems.  
\end{remark}

\subsection{A fast implementation of the HOT algorithm}
In this section, we present a fast implementation of the HOT algorithm. Through direct calculations, we obtain the following updates of $\bar{z}^k$ and $\bar{y}^k$ for any \(k \geq 0\):
\begin{enumerate}
    \item Update of \(\bar{z}^k\):
\[
\bar{z}^{k} = \Pi_{\mathbb{R}_{+}^{N}}\left(c - A^{\top}\bar{y}^k - \bar{x}^k / \sigma\right);
\]
\item Update of \(\bar{y}^k\):
\begin{equation}\label{update:y}
  AA^{\top} \bar{y}^k = \frac{b}{\sigma} - A\left(\frac{x^{k}}{\sigma} + z^{k} - c\right).  
\end{equation}
\end{enumerate}

Therefore, the main computational bottleneck of the HOT algorithm for solving \eqref{model:standLP} is solving the linear system \eqref{update:y}. Note that the dimension of the matrix $AA^{\top}$ is $M_3 \times M_3$ with $M_3$ defined in \eqref{model:standLP0}. In applications, $M_3$ is usually a huge number. As an illustrative example, for an image with \(256 \times 256\) pixels, \(M_3 = 196,607\). As a result, it is not computationally affordable for computing a (sparse) Cholesky decomposition for the matrix $AA^{\top}$ or solving the linear system \eqref{update:y} with standard direct solvers. Indeed, it is computationally expensive even for computing the matrix $AA^{\top}$. Instead, in the remaining part of this subsection, we will derive a linear time complexity procedure for solving the linear equation \(AA^{\top}{y} = R\) with a given vector \(R \in \mathbb{R}^{M_3}\). It is worthwhile mentioning that our procedure does not require calculating nor storing the matrix $AA^{\top}$. Note that ${A} A^{\top}$ can be written in the following form:
    \begin{equation}\label{Mat:OT_AAT}
	    {A} A^{\top}=\left[\begin{array}{ccc}
		E_{1} & E_{2} & E_{3}  \\
		 E_{2}^{\top} & E_{4} & \mathbf{0}\\ E_{3}^{\top}& \mathbf{0} &E_{5} 
	\end{array}\right],
	\end{equation}	
	where	
	\begin{enumerate}
	    \item $E_{1}=(m+n)I_{M}\in   \mathbb{R}^{M\times M}$;
	    \item $E_{2}= \operatorname{diag}\left(\mathbf{1}_{m}\mathbf{1}_{m}^{\top},\ldots, \mathbf{1}_{m}\mathbf{1}_{m} ^{\top},\mathbf{1}_{m}\mathbf{1}_{m} ^{\top} \right)  \in \mathbb{R}^{M \times M}$;       
            \item $E_{3}= -\mathbf{1}_{n}\otimes(I_m,\ldots,I_m,\bar{I}_{m}^{\top} )\in\mathbb{R}^{M\times (M-1)}$; 
	     \item $E_{4}=  m I_{M}\in   \mathbb{R}^{M\times M} $;
	     \item $E_5={A}_4{A}_4^{\top}=nI_{M-1}\in \mathbb{R}^{(M-1)\times(M-1)}$.
	\end{enumerate}
 To better explore the structure of the linear system $AA^{\top}y=R$, we rewrite it equivalently as
\begin{equation}\label{equ:normal}
	AA^{\top}y=\left[\begin{array}{ccc}
		E_{1} & E_{2} & E_{3}  \\
		 E_{2}^{\top} & E_{4} & \mathbf{0}\\ E_{3}^{\top}& \mathbf{0} &E_{5} 
	\end{array}\right]\left[\begin{array}{c}
		y_1  \\
		y_2 \\
		y_3 
	\end{array}\right]=\left[\begin{array}{c}
		R_1  \\
		R_2 \\
		R_3
	\end{array}\right],
\end{equation}
where $y:=(y_1; y_2; y_3)\in \mathbb{R}^{M}\times\mathbb{R}^{M} \times\mathbb{R}^{M-1}$ and $ R:= (R_1; R_2; R_3)\in \mathbb{R}^{M}\times\mathbb{R}^{M} \times\mathbb{R}^{M-1}$. To further explore the block structure of the linear system, we can denote $y_i:= (y_i^1; \dots; y_i^n) \in \mathbb{R}^{m} \times \cdots \times \mathbb{R}^{m}$ for $i=1,2$, and $y_3 = (y_3^1; \dots; y_3^n) \in \mathbb{R}^{m} \times \cdots \times  \mathbb{R}^{m} \times \mathbb{R}^{m-1}$. Correspondingly, we write $R_i = (R_i^1; \dots; R_i^n)$ for $i=1,2,3$. The next proposition gives an explicit formula of the solution to the linear equation in \eqref{equ:normal}. 
\begin{proposition}\label{prop:linsol_OT}
Consider \(A \in \mathbb{R}^{M_3 \times N}\) defined in \eqref{Mat:A&b}. Given \(R \in \mathbb{R}^{M_3}\), the solution \(y\) to \(AA^{\top}y = R\) in the form \eqref{equ:normal} is given by:
\begin{eqnarray}
&& y_2^{j} = \frac{1}{m}( R_2^{j} - \mathbf{1}_{m}^{\top}y^{j}_1 ), \quad j = 1, \ldots, n, \label{y2} \\
&& y_3^{j} = \frac{1}{n}( R_3^{j} + \sum_{j=1}^{n}y_{1}^{j} ), \quad j = 1, \ldots, n-1, \\
&& y_3^{n} = \frac{1}{n}( R_3^{n} + \bar{I}_{m} \sum_{j=1}^{n}y_{1}^{j} ), \label{y3} \\
&& y_1^{j} = \hat{y}_1^{j} - \hat {y}_1^{a}, \quad j = 1, \ldots, n, \label{y1}
\end{eqnarray}
where
\begin{enumerate}
    \item \(\hat {y}_1^{j} = \frac{1}{m+n} \left( \tilde{R}_{1}^{j} + \tilde{R}_{2}^{j} + \tilde{R}_{3} \right), \quad j = 1, \ldots, n,\) with \(\tilde{R}_{1}^{j} = R^{j}_1 + \frac{1}{n} \mathbf{1}_{m}^{\top} R^{j}_1\), \(\tilde{R}_{2}^{j} = -\left( \frac{1}{m} + \frac{1}{n} \right) \mathbf{1}_{m}^{\top} R^{j}_2\), and \(\tilde{R}_{3} = \frac{1}{n} \left( \sum_{j=1}^{n-1} R^{j}_{3} + \bar{I}_{m}^{\top} R^{n}_{3} \right) + \frac{1}{n^2} \mathbf{1}_{M-1}^{\top} R_3\);
    \item \(\hat {y}_1^{a} = \left( I_{m} + \frac{1}{n} \mathbf{1}_{m} \mathbf{1}_{m}^{\top} \right) \hat{W} \sum_{j=1}^{n} \hat{y}_{1}^{j}\);
    \item \(\hat{W} = \left( -\operatorname{diag}\left( \frac{1}{m} I_{m-1}, \frac{1}{m+1} \left( 1 - \frac{1}{n} \right) \right) - \frac{1}{w} d d^{\top} \right),\) with \(d = \left[ \frac{1}{m} \mathbf{1}_{m-1}; \frac{1}{m+1} \left( 1 - \frac{1}{n} \right) \right] \in \mathbb{R}^{m}\) and \(w = \frac{1}{m} - \frac{1}{(m+1)} \left( 1 - \frac{1}{n} \right)\).
\end{enumerate}
\end{proposition}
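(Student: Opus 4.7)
The plan is to carry out block Gaussian elimination on the $3\times 3$ system \eqref{equ:normal}, exploiting that the $(2,2)$ and $(3,3)$ diagonal blocks are the trivially invertible $E_4=mI_M$ and $E_5=nI_{M-1}$, and then to invert the resulting Schur complement for $y_1$ via two successive applications of the Sherman--Morrison--Woodbury identity.

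First, I would solve the second block row for $y_2=\frac{1}{m}(R_2-E_2^\top y_1)$; reading off the block-diagonal structure of $E_2$ immediately gives \eqref{y2}. Analogously, the third block row gives $y_3=\frac{1}{n}(R_3-E_3^\top y_1)$. Writing $E_3=-\mathbf{1}_n\otimes U$ with $U=[I_m,\ldots,I_m,\bar I_m^\top]$, the product $E_3^\top y_1$ telescopes to $-U^\top\sum_{j'=1}^n y_1^{j'}$, and unpacking the blocks of $U^\top$ produces the formulas for $y_3^j$ in \eqref{y3}, with the last block carrying a $\bar I_m$ because of the truncation in $U$.

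Substituting these expressions into the first block row produces the Schur complement equation $(E_1-\tfrac{1}{m}E_2 E_2^\top-\tfrac{1}{n}E_3 E_3^\top)y_1=R_1-\tfrac{1}{m}E_2 R_2-\tfrac{1}{n}E_3 R_3$. Using $E_2 E_2^\top=mE_2$ together with the identity $E_3 E_3^\top=(\mathbf{1}_n\mathbf{1}_n^\top)\otimes(nI_m-e_m e_m^\top)$ (which follows from $\bar I_m^\top\bar I_m=I_m-e_m e_m^\top$, with $e_m$ the $m$-th standard basis vector of $\mathbb{R}^m$), the $n$ block equations couple only through $s:=\sum_{j'}y_1^{j'}$ and take the identical form $By_1^j-\beta s=\tilde R^j$, where $B:=(m+n)I_m-\mathbf{1}_m\mathbf{1}_m^\top$ and $\beta:=I_m-\tfrac{1}{n}e_m e_m^\top$. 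A first Sherman--Morrison step gives $B^{-1}=\tfrac{1}{m+n}(I_m+\tfrac{1}{n}\mathbf{1}_m\mathbf{1}_m^\top)$, and a short computation identifies $B^{-1}\tilde R^j$ with $\tfrac{1}{m+n}(\tilde R_1^j+\tilde R_2^j+\tilde R_3)$. The ansatz $y_1^j=\hat y_1^j+B^{-1}\beta s$ with $\hat y_1^j:=B^{-1}\tilde R^j$ thus matches item~1 of the proposition.

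Summing the ansatz over $j$ produces the equation $(B-n\beta)s=B\sum_{j'}\hat y_1^{j'}$ for the common coupling vector $s$, with $B-n\beta=mI_m-\mathbf{1}_m\mathbf{1}_m^\top+e_m e_m^\top$ a rank-two perturbation of $mI_m$ whose inverse comes from a second, rank-two Woodbury step. Substituting back into $\hat y_1^a:=-B^{-1}\beta s$ and pulling out the prefactor $(m+n)B^{-1}=I_m+\tfrac{1}{n}\mathbf{1}_m\mathbf{1}_m^\top$ reproduces the stated formulas for $\hat W$, $d$, and $w$, from which \eqref{y1} follows. The main obstacle I expect is this last bookkeeping step: the trailing $\bar I_m^\top$ in $U$ breaks the pure Kronecker structure of $E_3 E_3^\top$ and introduces the $e_m e_m^\top$ correction, which in turn is what forces $\hat W$ to carry the two distinct diagonal values $\tfrac{1}{m}$ (repeated $m-1$ times) and $\tfrac{1}{m+1}(1-\tfrac{1}{n})$, plus the rank-one term $-\tfrac{1}{w}dd^\top$. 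These specific constants arise from the $2\times 2$ inverse inside the second Woodbury expansion, but tracking them so that no unwanted cross-terms survive and so that the prefactor $(I_m+\tfrac{1}{n}\mathbf{1}_m\mathbf{1}_m^\top)$ separates out cleanly is where the algebra is genuinely delicate; the remaining identifications are routine.
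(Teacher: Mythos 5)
Your proposal is correct and follows essentially the same route as the paper's proof: block elimination of $y_2,y_3$, the Schur complement equation $\tilde E_1 y_1=\hat R_1$, inversion of the block-diagonal part $(m+n)I_m-\mathbf{1}_m\mathbf{1}_m^{\top}$ by Sherman--Morrison, and a second Woodbury step for the coupling term, which yields exactly the stated $\hat W$, $d$, $w$. The only (cosmetic) difference is that the paper factors the coupling as $\bar Q\bar Q^{\top}$ with $\bar Q=\mathbf{1}_n\otimes Q^{1/2}$ and inverts the resulting $m\times m$ capacitance matrix $W$ by a rank-one Sherman--Morrison, whereas you reduce to the sum variable $s=\sum_j y_1^j$ directly and invert $mI_m-\mathbf{1}_m\mathbf{1}_m^{\top}+e_me_m^{\top}$ by a rank-two update, avoiding the matrix square root; one checks that $-\beta(B-n\beta)^{-1}B=\hat Q W^{-1}\hat Q=(m+n)\hat W$, so the two computations coincide.
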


\begin{proof}
By some direct calculations, we can solve \eqref{equ:normal} equivalently as:
\begin{eqnarray}
&& y_2 = \frac{R_2 - E_2^{\top} y_1}{m}, \label{equ:normal_y_2} \\
&& y_3 = \frac{R_3 - E_3^{\top} y_1}{n}, \label{equ:normal_y_3} \\
&& \tilde{E}_1 y_1 = R_1 - \frac{1}{m} E_2 R_2 - \frac{1}{n} E_3 R_3, \label{equ:normal_2}
\end{eqnarray}
where \(\tilde{E}_1 = \left( E_1 - \frac{1}{m} E_2 E_2^{\top} - \frac{1}{n} E_3 E_3^{\top} \right)\). As a result, the key is to obtain \(y_1\) by solving \eqref{equ:normal_2}. Let \(\hat{E}_1 := E_1 - \frac{1}{m} E_2 E_2^{\top}\). By direct calculations, we have:
\begin{equation}\label{Mat:hatE1}
\hat{E}_1 = \operatorname{diag}\left( \hat{E}_1^{1}, \ldots, \hat{E}_1^{n} \right)
\end{equation}
with \(\hat{E}_1^{j} = (m+n) I_{m} - \mathbf{1}_{m} \mathbf{1}_{m}^{\top}, \quad j = 1, \ldots, n.\) By the Sherman–Morrison-Woodbury formula, we directly get:
\begin{equation}\label{equ:invhatE5}
\hat{E}_1^{-1} = \operatorname{diag}\left( (\hat{E}_1^{1})^{-1}, \ldots, (\hat{E}_1^{n})^{-1} \right)
\end{equation}
with \((\hat{E}_1^{j})^{-1} = \frac{1}{m+n} \left( I_{m} + \frac{1}{n} \mathbf{1}_{m} \mathbf{1}_{m}^{\top} \right), j = 1, \ldots, n.\)
On the other hand, let:
\[
Q = \frac{n-1}{n} I_{m} + \frac{1}{n} \bar{I}_{m}^{\top} \bar{I}_{m} \in \mathbb{R}^{m \times m}.
\]
Denote \(\hat{Q} := Q^{1/2}\) such that \(\hat{Q} \hat{Q} = Q\), and \(\bar{Q} := \mathbf{1}_{n} \otimes \hat{Q}\). We can obtain:
\[
\tilde{E}_1 = \left( \hat{E}_1 - \frac{1}{n} E_3 E_3^{\top} \right) = \hat{E}_1 - \bar{Q} \bar{Q}^{\top}.
\]
Hence, by the Sherman–Morrison-Woodbury formula, we can derive:
\begin{equation}\label{equ:invS}
\begin{array}{ll}
\tilde{E}_1^{-1} &= \left( \hat{E}_1 - \frac{1}{n} E_3 E_3^{\top} \right)^{-1} \\
&= \hat{E}_1^{-1} - \hat{E}_1^{-1} \bar{Q} W^{-1} \bar{Q}^{\top} \hat{E}_1^{-1}
\end{array}
\end{equation}
with \(W = -I_m + \bar{Q}^{\top} \hat{E}_1^{-1} \bar{Q}\). Note that:
\[
\begin{array}{ll}
W &= -I_m + \sum_{j=1}^{n} \hat{Q} (E_1^{j})^{-1} \hat{Q} \\
&= \frac{1}{m+n} \left( \operatorname{diag}\left( -m I_{m-1}, -(m+1) \right) + d_0 d_0^{\top} \right),
\end{array}
\]
where \(d_0 = [\mathbf{1}_{m-1}; \sqrt{1 - \frac{1}{n}}] \in \mathbb{R}^{m}\). Applying the Sherman–Morrison-Woodbury formula to \(W\), we can obtain:
\[
\begin{array}{ll}
W^{-1} &= (m+n) \left( \operatorname{diag}( -\frac{1}{m} I_{m-1}, -\frac{1}{m+1}) - \frac{1}{w} d_1 d_1^{\top} \right)
\end{array}
\]
with \(d_1 = [\frac{1}{m} \mathbf{1}_{m-1}; \frac{1}{m+1} \sqrt{1 - \frac{1}{n}}]\). It follows that
\begin{equation}\label{equ:QWQ}
\begin{array}{ll}
\bar{Q} W^{-1} \bar{Q}^{\top} &= \mathbf{1}_{n} \mathbf{1}_{n}^{\top} \otimes (\hat{Q} W^{-1} \hat{Q}) \\
&= \mathbf{1}_{n} \mathbf{1}_{n}^{\top} \otimes (m+n) \hat{W}.
\end{array}    
\end{equation}
To explore the block structure of \(R\), we denote:
\[
\hat{R}_{1} = R_1 - \frac{1}{m} E_2 R_2 - \frac{1}{n} E_3 R_3 = (\hat{R}_{1}^1; \ldots; \hat{R}_{1}^n) \in \mathbb{R}^{m} \times \cdots \times \mathbb{R}^{m},
\]
which implies
\[
\hat{R}_{1}^{j} = R^{j}_{1} - \frac{1}{m} \mathbf{1}_{m}^{\top} R^{j}_{2} + \frac{1}{n} \left( \sum_{j=1}^{n-1} R^{j}_{3} + \bar{I}_{m}^{\top} R^{n}_{3} \right), \quad j = 1, \ldots, n.
\]
Then, for \(j = 1, \ldots, n\),
\[
\begin{array}{ll}
\hat{y}_{1}^{j} &= (\hat{E}_{1}^{-1} \hat{R}_{1})^{j} \\
&= \frac{1}{m+n} \left( \tilde{R}_{1}^{j} + \tilde{R}_{2}^{j} + \tilde{R}_{3} \right).
\end{array}
\]
Define
\[
\hat {y}_1^{a} := \left( I_{m} + \frac{1}{n} \mathbf{1}_{m} \mathbf{1}_{m}^{\top} \right) \hat{W} \sum_{j=1}^{n} \hat{y}_{1}^{j}.
\]
From \eqref{equ:normal_2}, \eqref{equ:invS}, and \eqref{equ:QWQ}, we have:
\[
y_1^{j} = \hat{y}_1^{j} - \hat {y}_1^{a}, \quad j = 1, \ldots, n.
\]
Substituting $y_1$ into \eqref{equ:normal_y_2} and \eqref{equ:normal_y_3}, we can obtain the results for $y_2$ and $y_3$.  This completes the proof.
\end{proof}
According to the explicit formula in Proposition \ref{prop:linsol_OT}, we can immediately derive the complexity result for solving the linear equation in \eqref{equ:normal}.
\begin{corollary}\label{coro:linearsystem}
Consider \(A \in \mathbb{R}^{M_3 \times N}\) defined in \eqref{Mat:A&b}. The linear system \(AA^{\top} y = R\) in the form \eqref{equ:normal} can be solved in \(O(M_3)\) flops.
\end{corollary}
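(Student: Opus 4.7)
The plan is to derive the complexity bound by walking through the closed-form expressions in Proposition \ref{prop:linsol_OT} one layer at a time and verifying that each piece costs only $O(M) = O(mn) = O(M_3)$ flops, with the key trick being that every matrix encountered in the formulas has either block-diagonal, rank-one, or identity-plus-rank-one structure and therefore never needs to be formed explicitly.

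First, I would precompute the common building blocks that feed into the definition of $\hat{y}_{1}^{j}$. The scalars $\mathbf{1}_{m}^{\top} R_1^{j}$ and $\mathbf{1}_{m}^{\top} R_2^{j}$ for $j=1,\ldots,n$ each cost $O(m)$, giving $O(mn)$ in total; the single vector $\tilde{R}_{3} \in \mathbb{R}^{m}$ requires summing $R_3^1,\ldots,R_3^{n-1}$ together with $\bar{I}_{m}^{\top} R_3^{n}$ (an $O(mn)$ computation) plus the scalar $\mathbf{1}_{M-1}^{\top} R_3$ (another $O(M)$ operation). From these, each $\tilde{R}_{1}^{j}$, $\tilde{R}_{2}^{j}$ and the resulting $\hat{y}_{1}^{j}$ are assembled in $O(m)$ work, yielding all of $\hat{y}_{1}^{1},\ldots,\hat{y}_{1}^{n}$ in $O(mn)$ flops.

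Next I would form $\hat{y}_{1}^{a}$. Computing $s := \sum_{j=1}^{n} \hat{y}_{1}^{j} \in \mathbb{R}^{m}$ costs $O(mn)$. Because $\hat{W} = -\operatorname{diag}\!\bigl(\tfrac{1}{m} I_{m-1}, \tfrac{1}{m+1}(1-\tfrac{1}{n})\bigr) - \tfrac{1}{w} d d^{\top}$ is diagonal-plus-rank-one, the product $\hat{W} s$ is evaluated in $O(m)$ via one inner product $d^{\top} s$ and a diagonal scaling. Similarly $\bigl(I_{m} + \tfrac{1}{n}\mathbf{1}_{m}\mathbf{1}_{m}^{\top}\bigr)(\hat{W} s)$ is identity-plus-rank-one and therefore costs $O(m)$. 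With $\hat{y}_{1}^{a}$ in hand, the $n$ subtractions $y_{1}^{j} = \hat{y}_{1}^{j} - \hat{y}_{1}^{a}$ together take $O(mn)$.

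Finally, with $y_1$ known, I would compute $y_2$ and $y_3$. Each $y_{2}^{j}$ reuses the scalar $\mathbf{1}_{m}^{\top} y_{1}^{j}$ at cost $O(m)$, for a total of $O(mn)$. For $y_3$, the sum $\sum_{j=1}^{n} y_{1}^{j}$ is computed once in $O(mn)$ and then reused in the $n$ formulas for $y_{3}^{j}$, each costing $O(m)$. Summing the contributions from all three stages gives $O(mn) = O(M) = O(M_3)$ flops, which is the claimed bound. The main (though mild) obstacle is simply the careful accounting above: one must resist the temptation to form $\hat{W}$, $\bar{Q} W^{-1} \bar{Q}^{\top}$, or any other $O(M) \times O(M)$ matrix explicitly, and instead apply every structured matrix via its rank-one or diagonal representation so that no step exceeds $O(M)$ work.
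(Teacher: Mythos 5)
Your accounting is correct and matches the paper's intent exactly: the paper offers no separate proof for this corollary beyond asserting that it follows ``immediately'' from the explicit formulas in Proposition~\ref{prop:linsol_OT}, and your flop-by-flop walk through those formulas --- exploiting the diagonal-plus-rank-one structure of $\hat{W}$ and $I_m + \tfrac{1}{n}\mathbf{1}_m\mathbf{1}_m^{\top}$ so that no $O(M)\times O(M)$ matrix is ever formed --- is precisely the omitted verification. Nothing is missing; you have simply made explicit what the paper leaves implicit.
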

Based on this corollary, we can determine the per-iteration computational cost of the HOT algorithm in Algorithm \ref{alg:HOT} for each iteration.
\begin{corollary}\label{coro:per-cost}
The per-iteration computational complexity of the HOT algorithm in Algorithm \ref{alg:HOT} in terms of flops is \(O(N)\).
\end{corollary}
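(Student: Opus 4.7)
The plan is to go through the four steps of Algorithm \ref{alg:HOT} one by one and verify that each costs at most $O(N)$ flops, using that $N = m^2 n + m n^2$ dominates $M_3 = 3M - 1 = O(mn)$. For Step 1, the update of $\bar{y}^k$ decomposes into two tasks: forming the right-hand side $b/\sigma - A(x^k/\sigma + z^k - c)$ and then solving the linear system $A A^\top \bar{y}^k = R$. The former requires $O(N)$ scalar-vector arithmetic plus one application of $A$ to a vector in $\mathbb{R}^N$, and the latter costs $O(M_3)$ flops by Corollary \ref{coro:linearsystem}, which is $O(N)$.

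The remaining steps reduce to matrix-vector products with $A$ or $A^\top$, elementwise operations, and vector combinations. Specifically, Step 2 forms $A^\top \bar{y}^k$ followed by $O(N)$ vector operations; Step 3 reuses this product and performs an elementwise projection onto $\mathbb{R}_+^N$, both $O(N)$; and Step 4 is a convex combination of vectors of total dimension $M_3 + 2N$, which is $O(N)$.

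The only genuine issue is justifying that applying $A$ or $A^\top$ to a vector requires only $O(N)$ flops without ever forming $A$ explicitly. I would argue this by inspecting the block structure of $A$ given in \eqref{Mat:A&b} together with the Kronecker-product definitions in \eqref{def:A1A2A3} and \eqref{def:A4}. Each of the blocks $A_1 = I_M \otimes \mathbf{1}_m^\top$, $A_2 = -\mathbf{1}_n^\top \otimes I_M$, $A_3 = I_n \otimes (\mathbf{1}_m^\top \otimes I_m)$, and $A_4$ has exactly one nonzero entry per column, so the total number of nonzeros in $A$ is $O(m^2n + mn^2) = O(N)$. Consequently, both $Ax$ and $A^\top y$ can be computed by reductions/broadcasts along Kronecker factors in $O(N)$ flops.

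Combining these bounds, every step of Algorithm \ref{alg:HOT} runs in $O(N)$ flops per iteration, yielding the desired conclusion. I do not expect any real obstacle beyond carefully organizing the case analysis; the crux is to invoke Corollary \ref{coro:linearsystem} for Step 1 and to exploit the Kronecker sparsity of $A$ everywhere else.
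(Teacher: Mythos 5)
Your proposal is correct and follows essentially the same route as the paper's proof: bound the number of nonzeros of $A$ by $O(N)$ so that $Ax$ and $A^{\top}y$ cost $O(N)$ flops, invoke Corollary \ref{coro:linearsystem} for the $O(M_3)$ linear solve, and observe that the remaining operations are elementwise or vector combinations of dimension $O(N)$. Your version is merely more explicit in walking through the four steps and in deriving the sparsity from the Kronecker structure (note only that the last block of $A_4$ built from $\bar{I}_m$ has \emph{at most}, not exactly, one nonzero per column, which does not affect the bound).
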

\begin{proof}
Since \(A\) has at most \(2N\) nonzero elements, the computational cost for calculating \(Ax\) and \(A^{\top}y\) is only \(O(N)\). Except for solving linear systems, which can be done in \(O(M_3)\) from Corollary \ref{coro:linearsystem}, Algorithm \ref{alg:HOT} primarily involves matrix-vector multiplications and vector additions. Hence, the computational cost of Algorithm \ref{alg:HOT} for each iteration is \(O(N)\).
\end{proof}
\begin{remark}
Leveraging the sparsity of \( A \), we can compute \( Ax \) and \( A^{\top}y \) with linear space complexity \( O(N) \). Furthermore, according to Proposition \ref{prop:linsol_OT}, the linear system \( AA^{\top}y = R \) can be solved with a memory cost of \( O(M_3) \). Hence, Algorithm \ref{alg:HOT} can be implemented with linear space complexity \( O(N) \). When \( m = n \), this corresponds to \( O(M^{1.5}) \).
\end{remark}

Combining the iteration complexity in Proposition \ref{prop:complexity} and the computational cost for each iteration in Corollary \ref{coro:per-cost}, we can derive the following overall computational complexity result of the HOT algorithm for solving problem \eqref{model: partite-graph}.

\begin{theorem}
Let \(\left\{\bar{y}^k, \bar{z}^k, \bar{x}^k\right\}\) be the sequence generated by the HOT algorithm in Algorithm \ref{alg:HOT}. For any given tolerance \(\varepsilon > 0\), the HOT algorithm needs at most
\[
\frac{1}{\varepsilon}\left(\frac{1+\sigma}{\sigma}\left(\left\|x^0-x^*\right\| + \sigma \left\|z^0-z^*\right\|\right)\right)-1
\]
iterations to return a solution to  the equivalent OT problem \eqref{model:standLP} such that the KKT residual \(\left\|\mathcal{R}\left(\bar{w}^k\right)\right\| \leq \varepsilon\), where \(\left(x^*, z^*\right)\) is the limit point of the sequence \(\left\{\bar{x}^k, \bar{z}^k\right\}\). In particular, the overall computational complexity of the HOT algorithm in Algorithm \ref{alg:HOT} to achieve this accuracy in terms of flops is
\[
O\left(\left(\frac{1+\sigma}{\sigma}\left(\left\|x^0-x^*\right\| + \sigma \left\|z^0-z^*\right\|\right)\right) \frac{m^2n+mn^2}{\varepsilon}\right).
\]
\end{theorem}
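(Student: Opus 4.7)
The plan is to combine the non-ergodic iteration complexity bound in Proposition \ref{prop:complexity} with the per-iteration cost given in Corollary \ref{coro:per-cost}. Both ingredients are already in place, so the argument is essentially a composition: first convert the $O(1/k)$ KKT-residual bound into an iteration count needed to reach tolerance $\varepsilon$, then multiply by the $O(N)$ flops per iteration.

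First, I would start from the bound
\[
\|\mathcal{R}(\bar w^k)\| \;\leq\; \frac{\sigma+1}{\sigma}\cdot\frac{R_0}{k+1}, \qquad R_0 = \|x^0-x^*+\sigma(z^0-z^*)\|,
\]
from Proposition \ref{prop:complexity}. Applying the triangle inequality gives $R_0 \leq \|x^0-x^*\|+\sigma\|z^0-z^*\|$, so it suffices to enforce
\[
\frac{\sigma+1}{\sigma}\cdot\frac{\|x^0-x^*\|+\sigma\|z^0-z^*\|}{k+1} \;\leq\; \varepsilon.
\]
Solving this inequality for $k$ immediately yields the iteration bound stated in the theorem, namely $k \geq \tfrac{1}{\varepsilon}\cdot\tfrac{\sigma+1}{\sigma}(\|x^0-x^*\|+\sigma\|z^0-z^*\|) - 1$, which guarantees $\|\mathcal{R}(\bar w^k)\|\leq\varepsilon$.

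For the second statement, I would invoke Corollary \ref{coro:per-cost}, which asserts that every iteration of Algorithm \ref{alg:HOT} costs $O(N)$ flops; recalling $N = m^2n + mn^2$ from the definition in Section \ref{sec: reduced OT model}, the total flop count is the product of the iteration bound and the per-iteration cost, giving the claimed $O\bigl(\tfrac{1+\sigma}{\sigma}(\|x^0-x^*\|+\sigma\|z^0-z^*\|)\cdot\tfrac{m^2n+mn^2}{\varepsilon}\bigr)$ complexity. I would note in passing that the per-iteration cost relies crucially on Corollary \ref{coro:linearsystem} (the $O(M_3)$ linear-solve procedure of Proposition \ref{prop:linsol_OT}), since otherwise solving \eqref{update:y} with a generic method would dominate and spoil the complexity.

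There is no real obstacle here; the only minor care points are the passage from $R_0$ to the additive form $\|x^0-x^*\|+\sigma\|z^0-z^*\|$ via the triangle inequality, and reconciling the typographic ``$s^0-s^*$'' in the displayed bound with the $z$-variable used throughout (this appears to be a typo for $z^0-z^*$). Both are cosmetic and do not affect the argument.
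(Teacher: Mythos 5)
Your proposal is correct and follows exactly the route the paper takes: it combines the $O(1/k)$ KKT-residual bound of Proposition \ref{prop:complexity} (upper-bounding $R_0$ by $\|x^0-x^*\|+\sigma\|z^0-z^*\|$ via the triangle inequality) with the $O(N)$ per-iteration cost from Corollary \ref{coro:per-cost}, where $N=m^2n+mn^2$. Your observations about the triangle-inequality step and the ``$s^0-s^*$'' typo are both accurate and do not affect the argument.
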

\begin{remark}
Note that the complexity of the Sinkhorn method for solving the original OT problem \eqref{model:OT} is \( O\left(\frac{m^2n^2}{\varepsilon^2}\right) \) to achieve an \(\varepsilon\)-accuracy solution in terms of objective function value \cite{dvurechensky2018computational}. Even by leveraging the separable  structure of the cost function \(c\) defined in \eqref{def:c}, the convolutional Sinkhorn method, as mentioned in \cite{solomon2015convolutional} and \cite{auricchio2018computing}, still requires \( O\left(\frac{m^2n + mn^2}{\varepsilon^2} + m^2n^2\right) \) to compute the KW distance. In contrast, the HOT algorithm exhibits lower overall computational complexity, offering a significant advantage in calculating the KW distance between large-scale histograms.
\end{remark}

\begin{remark}
Assume that \( m = n \). Blanchet et al. \cite{blanchet2024towards} demonstrated that a running time of \( \widetilde{O}({M^2}/{\varepsilon}) \) is a bottleneck in their approach by reducing an instance of the bipartite matching problem (BMP) to an OT problem. While HOT achieves a complexity of \( O(M^{1.5}/\varepsilon) \), it cannot be directly applied to solve the maximum cardinality BMP. This is because, in the reduction from BMP to the OT problem, the ground distance used in \cite{blanchet2024towards} differs from the one in our paper. Consequently, the hardness result in \cite{blanchet2024towards} does not directly contradict our complexity bound.
\end{remark}

\section{Experiments}\label{sec: Experiment}

In this section, we comprehensively compare the HOT algorithm with five other state-of-the-art methods on the popular DOTmark dataset \cite{schrieber2016dotmark}. Additionally, employing the transport plan derived from Algorithm \ref{alg:transportplan}, we test the performance of the reduced OT model on the color transfer task. The numerical results reveal that HOT provides significant advantages over state-of-the-art methods in both memory and computational efficiency, particularly for large-scale problems. More numerical results to compare the HOT algorithm to the W2NeuralDual method implemented in the OTT-JAX library \cite{cuturi2022optimal} can be found in Appendix \ref{comp_W2NeuralDual}. We also present an additional application of HOT for domain adaptation in Appendix \ref{domain-adaptation}.

\subsection{Numerical comparison on the DOTmark dataset}

The DOTmark dataset \cite{schrieber2016dotmark} is a comprehensive collection of benchmark instances for evaluating and comparing algorithms in the field of optimal transport. It consists of a variety of instances categorized into different classes, such as Classic Images, Shapes, and Gaussian Distributions. In this experiment, we selected eight images each from the Classic Images and Shapes categories, as illustrated in Fig. \ref{fig:dataset_visualize}. These selected images were resized to four different resolutions:  
$64 \times 64$, $128 \times 128$, $256 \times 256$, and $512 \times 512$. Finally, we randomly selected 10 pairs from each category and computed the KW distance for each pair.
\begin{figure*}[t!]
\centering
    \includegraphics[width=0.7\textwidth]{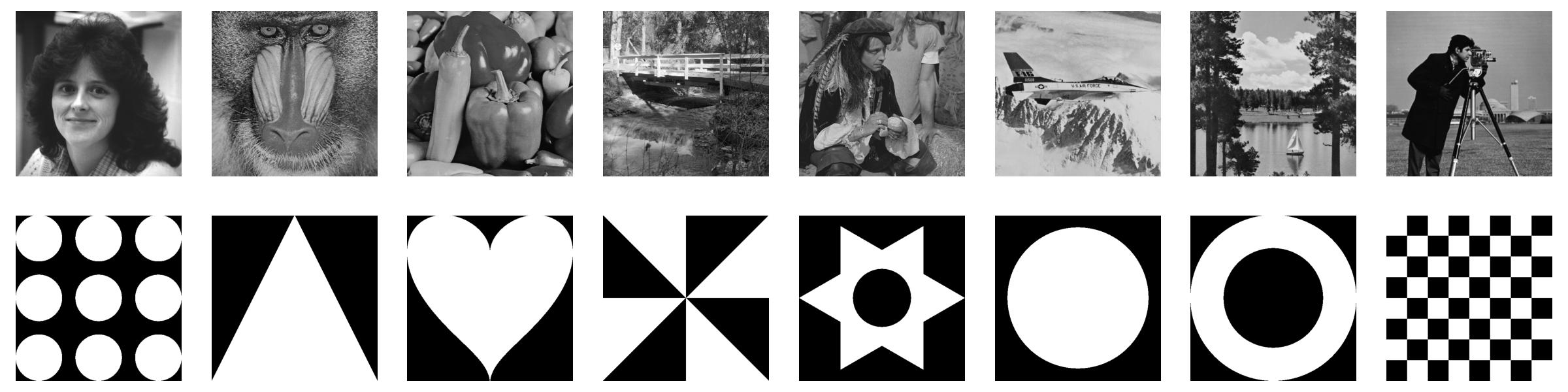}
    \caption{A visualization of the selected images from the DOTmark Dataset is presented. The upper row features images from the Classic Images category, while the bottom row contains images from the Shapes category.}
    \label{fig:dataset_visualize}
\end{figure*}

To exhibit the superiority of HOT, we compare it with the following five state-of-the-art methods:

\begin{itemize}
    \item \textbf{Sinkhorn} \cite{cuturi2013sinkhorn} is a widely used algorithm for computing an approximate KW distance by solving an entropy-regularized OT problem. As the Introduction Section discusses, its performance is sensitive to the regularization parameter, denoted as $\lambda$. To achieve varying levels of solution accuracy, we selected the $\lambda$ to be $1\%$, $0.1\%$, and $0.01\%$ of the median transport cost, following the setup from \cite{solomon2015convolutional}. Due to the potential numerical instability caused by small $\lambda$ values, we employed the Log-domain Sinkhorn algorithm implemented by POT \cite{flamary2021pot} as the baseline method.  
    
    \item \textbf{Convolutional Sinkhorn} \cite{solomon2015convolutional,solomon2018optimal} is an improved version of the Sinkhorn algorithm, optimized for computing distances over regular two-dimensional grids. Specifically, since the kernel matrix used by the Sinkhorn algorithm can be constructed using a Kronecker product of two smaller matrices, a matrix-vector product using a matrix of dimension \(M \times M\) can be replaced by two matrix-matrix products over matrices of dimension \(\sqrt{M} \times \sqrt{M}\), resulting in a significant improvement in computational efficiency. In our experiment, based on the MATLAB implementation of the convolutional Sinkhorn in \cite{auricchio2018computing}, we further developed the Log-domain convolutional Sinkhorn using PyTorch. The regularization parameter $\lambda$ is kept the same as that of the original Sinkhorn method. 
    
    \item \textbf{Gurobi (11.0.1)} is a popular optimization solver designed to address a wide range of mathematical programming problems, including linear and quadratic programming. In our experiment, we employ the interior point method (IPM) implemented in Gurobi to solve the reduced model \eqref{model:standLP}. Since it is unnecessary to obtain a basic solution, we disable the cross-over strategy.

    \item \textbf{Network Simplex} implemented in the Lemon C++ graph library \footnote{\url{https://lemon.cs.elte.hu/}}, is a highly efficient algorithm for solving uncapacitated minimum cost flow problems \cite{kovacs2015minimum}. It has demonstrated the computational advantages over the Sinkhorn type methods in \cite{auricchio2018computing}  for  
    small size
    images by solving the reduced model \eqref{model:standLP}.

    \item \textbf{ADMM} \cite{gabay1976dual,glowinski1975approximation} is a popular first-order primal-dual method for solving large-scale optimization problems. It has shown great potential in solving large-scale optimal transport problems \eqref{model:OT} in a GPU setting \cite{mai2022fast}. In this numerical experiment, we use a generalized ADMM \cite{eckstein1992douglas,xiao2018generalized} to solve the equivalent model \eqref{model:standLP} (replacing Step 4 in Algorithm \ref{alg:HOT} with \(w^{k+1}= (1-\rho)w^k+ \rho{\bar{w}}^{k}\) and setting \(\rho=1.7\)) as the baseline to evaluate the acceleration effect of the Halpern iteration.
\end{itemize}

All experiments are conducted on an Ubuntu 22.04 server equipped with an Intel(R) Xeon(R) Platinum 8480C processor and an Nvidia GeForce RTX 4090 GPU with 24 GB of RAM. Due to hardware specifications, we limit the maximum memory usage of each algorithm to 24 GB. Additionally, we set the maximum one-call running time of each algorithm to 3600 seconds. For the HOT and ADMM methods, we adopt a stopping criterion based on the relative KKT residual:
\begin{equation}
\small
    {\rm KKT_{res}} = \max\left\{\frac{\|A^{\top} y + z - c\|}{1 + \|c\|}, \frac{\|\min(x, z)\|}{1 + \|x\| + \|z\|}, \frac{\|Ax - b \|}{1 + \| b \|} \right\}.
    \label{kkt_res}
\end{equation}
Other methods use their default stopping criteria. We terminate all tested algorithms, except for Network Simplex which is an exact algorithm with a stopping tolerance of 1E-6. Finally, since different methods have varying stopping criteria, we use the following metrics to fairly evaluate the quality of the solutions: the `relative objective gap' (gap) and the `relative primal feasibility error' (feaserr). These metrics are defined as follows:
$$
\begin{array}{l}
        {\rm gap} = \frac{| \langle c, x \rangle - \langle c, x_{b} \rangle |}{| \langle c, x_{b} \rangle | + 1},\\
      {\rm  feaserr} = \max \left\{ \frac{\| \min(x, \mathbf{0}) \|}{1 + \| x \|}, \frac{\| Ax - b \|}{1 + \| b \|} \right\},
\end{array}
$$
where \(x_{b}\) is the solution obtained using Gurobi with the tolerance set to 1E-8.

We present the average results of 10 pairs within each category for all tested algorithms in Table \ref{tab:result}. Since Gurobi runs out of memory for images sized from \(256 \times 256\) to \(512 \times 512\), we only report the feaserr for the HOT, Network Simplex, and ADMM. Additionally, Table \ref{tab:result} only shows the results of Sinkhorn-type methods with \(\lambda = 0.01\% \) of the median transport cost, as this parameter returns a solution of comparable quality to other methods. For more results of the convolutional Sinkhorn with different regularization parameters, refer to Table \ref{tab:sinkhorn}. Due to space constraints, we omit the results of the Sinkhorn with varying parameters, which exhibit similar performance to the convolutional Sinkhorn in terms of the gap.

We summarized some key findings in Table \ref{tab:result} from the perspective of computational efficiency and memory cost:
\begin{enumerate}
    \item \textbf{Computational efficiency:} HOT can return a comparable solution in terms of the gap and feaserr in the shortest time.
    Although the Network Simplex and Gurobi have computational advantages for computing the KW distance for small-scale images, these methods cannot handle large-scale problems effectively. IPM implemented in Gurobi requires solving a linear equation that relies heavily on Cholesky decomposition, causing the computational cost of each iteration to increase rapidly with image size. Additionally, the inherent sequential nature of the Network Simplex algorithm makes it challenging to parallelize effectively. In contrast, HOT and convolutional Sinkhorn benefit from low per-iteration costs and are easily parallelizable. However, the convolutional Sinkhorn needs to recover the solution to the original problem \eqref{model:OT} to compute the KW distance, making it unsuitable for large-scale problems. As a result, for images sized \(128 \times 128\), HOT achieves a 17.44x speedup over Network Simplex, a 15.83x speedup over Gurobi, and a 19.54x speedup over convolutional Sinkhorn. Additionally, compared to ADMM, HOT benefits from a superior \(O(1/k)\) iteration complexity and saves 40\% of iterations for images sized \(512 \times 512\).    
    
    \item \textbf{Memory cost:} HOT demonstrates a significant advantage in memory efficiency by exploiting the sparse structure of \(A\) in \eqref{def:A1A2A3} and utilizing the explicit solution of the linear equation presented in Proposition \ref{prop:linsol_OT} to avoid sparse Cholesky decomposition. In contrast, Sinkhorn-type methods need to recover the solution to the original OT problem \eqref{model:OT} and maintain the transport cost to calculate the KW distance, which involves \(M^2\) variables. Given that these variables are stored using 64-bit floating-point representation, the Sinkhorn-based methods require at least 32 GB of memory for images sized \(256 \times 256\), which far exceeds the available 24 GB. While the IPM in Gurobi solves the reduced model \eqref{model:standLP}, each iteration requires performing a sparse Cholesky decomposition, making it unsuitable for images sized \(256 \times 256\) or larger.
\end{enumerate}

\begin{table*}
    \centering
    \caption{The numerical results of different algorithms on the DOTmark dataset. }
    \begin{tabular*}{0.85\textwidth}{ccccccccc}
        \multicolumn{3}{c}{} & \multicolumn{1}{c}{} & \multicolumn{1}{c}{}  &
        \multicolumn{1}{c}{} & \multicolumn{1}{c}{} & \multicolumn{1}{c}{} & \multicolumn{1}{c}{}    \\
        \hline
        Category    &   Resolution  &   & \textbf{HOT}  & Network Simplex  & Gurobi  & ADMM & Convolutional Sinkhorn & Sinkhorn \\
        \hline
        \multirow{14}{*}{Classic} &   \multirow{4}{*}{$64 \times 64$}     & time(s)  & \textbf{0.67} & 2.73 & 2.16 &  1.77 & 16.18 &  174.82                      \\
        &   &   gap     & 8.26E-04 & 3.46E-10 & 1.20E-04  & 2.67E-04 & 1.69E-04 & 2.12E-04 \\
        &   &   feaserr & 4.58E-07 & 4.88E-32 & 2.55E-11 & 3.09E-07 & 7.90E-07 & 9.75E-07 \\
        &   &   iter    & 1700 & - & 13 & 3420 & 64126 & 62474 \\
        \cdashline{2-9}
        &   \multirow{4}{*}{$128 \times 128$}     & time(s)  & \textbf{1.58} & 36.18 & 29.15 & 3.53 & 39.40 &  2632.17                      \\
        &   &   gap     & 6.24E-03 & 8.74E-10  & 1.07E-04  & 1.72E-03 & 6.98E-04 & 6.35E-04 \\
        &   &   feaserr & 7.27E-07 & 9.67E-32 & 7.24E-12 & 3.73E-07 & 8.34E-07 & 9.87E-07 \\
        &   &   iter    & 1170 & - & 14 & 3240 & 58446 & 57010 \\
        \cdashline{2-9}
        &   \multirow{3}{*}{$256 \times 256$}     & time(s)  & \textbf{12.98} & 2562.92 & \multirow{3}{*}{\shortstack{Memory \\ Overflow}} & 20.80 &
        \multirow{3}{*}{\shortstack{Memory \\ Overflow}} & \multirow{3}{*}{\shortstack{Memory \\ Overflow}}                        \\
        &   &   feaserr & 8.05E-07 & 1.35E-31 & & 6.04E-07 \\
        &   &   iter    & 1140 & - &  & 2250 &  &  \\
        \cdashline{2-9}
        &   \multirow{3}{*}{$512 \times 512$}     & time(s)  & \textbf{81.02} & \multirow{3}{*}{\shortstack{Over Maximum \\ Running \\ Time}} & 
        \multirow{3}{*}{\shortstack{Memory \\ Overflow}} &  116.92 & \multirow{3}{*}{\shortstack{Memory \\ Overflow}} & \multirow{3}{*}{\shortstack{Memory \\ Overflow}}    \\
        &   &   feaserr & 3.28E-07 & & & 4.32E-07 \\
        &   &   iter    & 900 & & & 1610 & & \\
        \hline
        \multirow{14}{*}{Shapes} &   \multirow{4}{*}{$64 \times 64$}     & time(s)  & \textbf{0.64} & 1.48 & 1.33 & 3.92 & 9.60 & 103.74                   \\
        &   &   gap     & 3.78E-04 & 1.81E-10 & 2.28E-05 & 5.85E-05 & 4.86E-05 & 6.07E-05 \\ 
        &   &   feaserr & 5.77E-07 & 7.24E-32 & 1.88E-10 & 2.58E-07 & 7.95E-07 & 9.68E-07 \\
        &   &   iter    & 1610 & - & 15 & 10430 & 37986 & 37077 \\
        \cdashline{2-9}
        &   \multirow{4}{*}{$128 \times 128$}     & time(s)  & \textbf{1.68} & 20.70 & 22.46 & 2.32 &  24.32 & 1616.34                       \\
        &   &   gap     & 2.51E-03 & 2.46E-09 & 2.19E-05 & 4.11E-04 & 3.28E-04 & 3.09E-04 \\
        &   &   feaserr & 1.01E-06 & 1.16E-31 & 2.01E-10 & 7.74E-07 & 8.01E-07 & 9.83E-07 \\
        &   &   iter    & 1240 & - & 18 & 2130 & 36080 & 35009 \\
        \cdashline{2-9}
        &   \multirow{3}{*}{$256 \times 256$}     & time(s)  & \textbf{14.87} & 959.77 & \multirow{3}{*}{\shortstack{Memory \\ Overflow}} &  23.30 &
        \multirow{3}{*}{\shortstack{Memory \\ Overflow}} & \multirow{3}{*}{\shortstack{Memory \\ Overflow}}                       \\
        &   &   feaserr & 6.68E-07 & 1.59E-31 & & 7.17E-07 \\
        &   &   iter    & 1310 & - & & 2530 & & \\
        \cdashline{2-9}
        &   \multirow{3}{*}{$512 \times 512$}     & time(s)  & \textbf{87.12} & \multirow{3}{*}{\shortstack{Over Maximum \\ Running \\ Time}} &
        \multirow{3}{*}{\shortstack{Memory \\ Overflow}} & 118.10 & \multirow{3}{*}{\shortstack{Memory \\ Overflow}} & \multirow{3}{*}{\shortstack{Memory \\ Overflow}}          \\
        &   &   feaserr & 3.54E-07 & & & 5.71E-07 \\
        &   &   iter    & 970 & & & 1630 & & \\
        \hline
    \end{tabular*}
    \label{tab:result}
\end{table*}

\begin{table*}
    \centering
    \caption{The numerical results of the convolutional Sinkhorn method with different $\lambda$.}
    \begin{tabular*}{0.7\textwidth}{ccccccc}
        \multicolumn{4}{c}{} & \multicolumn{1}{c}{} & \multicolumn{1}{c}{}  &
        \multicolumn{1}{c}{} \\
        \hline
        Solver & Category & Resolution & & $\lambda = 0.01\%$ & $\lambda = 0.1\%$ & $\lambda = 1\%$\\
        \hline
        \multirow{8}{*}{Convolutional Sinkhorn} & 
        \multirow{8}{*}{Classic} &
        \multirow{4}{*}{$64 \times 64$} & time(s) & 16.18 & 1.60 & 0.19 \\ 
        & & & gap & 1.69E-04 & 2.43E-02 & 3.29E-01  \\
        & & & feaserr & 7.90E-07 & 8.15E-07 & 7.10E-07 \\
        & & & iter & 64126 & 6400 & 650 \\ 
        \cdashline{3-7}
        & & \multirow{4}{*}{$128 \times 128$} & time(s) & 39.40 & 3.94 & 0.39 \\ 
        & & & gap & 6.98E-04 & 3.34E-02 & 3.51E-01  \\
        & & & feaserr & 8.34E-07 & 8.29E-07 & 7.29E-07 \\
        & & & iter & 58446 & 5850 & 594 \\ 
        \hline
    \end{tabular*}
    \label{tab:sinkhorn}
\end{table*}

To further illustrate the benefit of the explicit solution of the linear system presented in Proposition \ref{prop:linsol_OT}, we conduct a comparison between solving the linear equation using Proposition \ref{prop:linsol_OT} and sparse Cholesky decomposition. Given that the average iteration number of HOT is around 1500, we solve the linear system \eqref{equ:normal} 1500 times for different image sizes. The results are shown in Fig. \ref{fig:HOTVSSpChol}. It is clear that the computational costs of Cholesky decomposition increase rapidly as image size increases. Additionally, the forward-backward substitution method used to solve the linear system is also time-consuming because it cannot be efficiently parallelized. In contrast, solving linear systems using Proposition \ref{prop:linsol_OT} can be easily parallelized. It consumes significantly less time and remains constant as the size varies because it only requires \(O(M_3)\) flops, as shown in Corollary \ref{coro:linearsystem}. Additional numerical results on transport plan recovery time, GPU vs. CPU comparisons, and the acceleration effects of Halpern iteration for high-accuracy solutions can be found in Appendix \ref{more-DOT}.

\begin{figure}[t!]
    \includegraphics[width=0.5\textwidth]{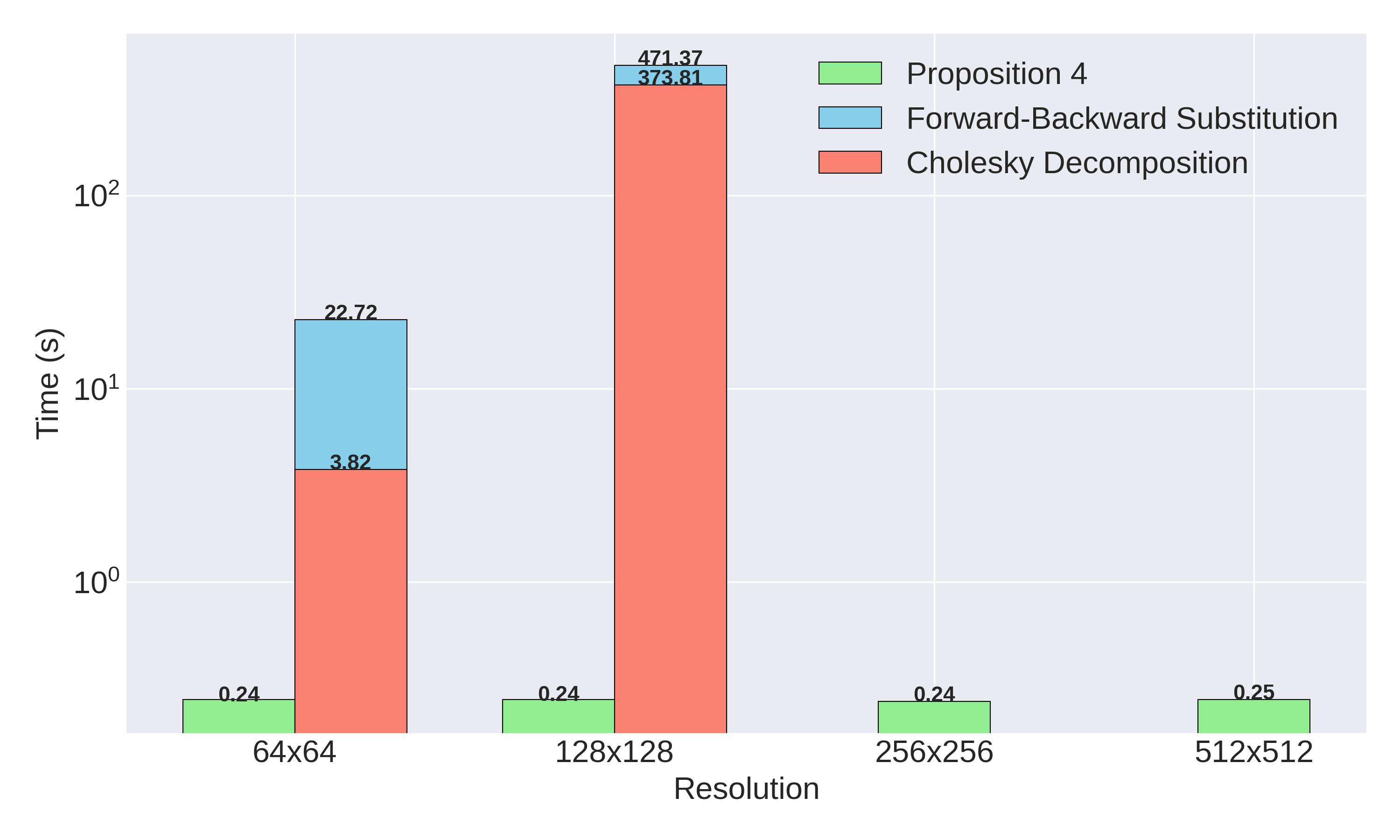}
    \caption{Comparison of solving the linear system \eqref{equ:normal} using Proposition \ref{prop:linsol_OT} and the sparse Cholesky decomposition. The time for solving the linear system using Cholesky decomposition is divided into two parts: the time for Cholesky decomposition (orange part) and the time for forward-backward substitution (blue part). For the $256 \times 256$ and $512 \times 512$ cases, Cholesky decomposition is out-of-memory in the test.} 
    \label{fig:HOTVSSpChol}
\end{figure}

\subsection{An application in color transfer}
Color transfer \cite{pitie2007linear,bonneel2013example,solomon2015convolutional} based on the OT model has found important applications in various fields, including digital image processing, computer graphics, and visual arts. It involves transferring the color characteristics from a target image to a source image to achieve a desired visual effect. Inspired by its convincing performance in color grading and color histogram manipulation \cite{bonneel2013example,solomon2015convolutional}, we conduct the color transfer over the CIE-Lab domain by applying the optimal transport model to the 1D luminance channel and the 2D chrominance channel independently. Note that the 1D optimal transport plan can be found efficiently \cite{villani2003topics}. To efficiently conduct the optimal transport over the 2D chrominance channel, we first solve the equivalent reduced OT model using the HOT Algorithm \ref{alg:HOT} and then recover an optimal transport plan using Algorithm \ref{alg:transportplan}.  
To construct the supports of the two-dimensional histograms used in the reduced OT model, we apply the K-means algorithm with $K = 128$ to all chrominance values in the CIE-Lab color space that appear in the source image or the target image. For each resulting centroid, we draw a vertical and a horizontal line. The intersections of these lines form a set of (non-uniform) bins, which serve as the histogram supports.   
The performance of color transfer for selected image pairs can be found in Fig. \ref{fig:color_transfer_head}. More results about color transfer can be found in Appendix \ref{more_application}.

\section{Conclusion}
\label{sec: conclusion}
In this paper, we proposed an efficient and scalable HOT algorithm for computing the KW distance with finite supports. In particular, the HOT algorithm solves an equivalent reduced OT model where the involved linear systems are solved by a novel procedure in linear time complexity. Consequently, we can obtain an $\varepsilon$-approximate solution to the OT problem with $M$ supports in $\mathbb{R}^2$ in $O(M^{1.5}/\varepsilon)$ flops, significantly enhancing the best-known computational complexity. Additionally, we have designed an efficient algorithm to recover an optimal transport plan from a solution to the reduced OT model, thereby overcoming the limitations of the reduced OT model in applications that require the transport plan. The extensive numerical results presented in this paper demonstrated the superior performance of the HOT algorithm. For future research directions, we aim to design an efficient implementation of the HOT algorithms for solving the OT problems with discrete supports in the more general $\mathbb{R}^d$ space. We also consider designing an efficient algorithm for solving the Wasserstein barycenter problem with discrete supports.

\section*{Acknowledgment}

The research of Yancheng Yuan was supported by the Research Center for Intelligent Operations Research and The Hong Kong Polytechnic University under the grant P0045485.  The research of Defeng Sun was supported by grants from the Research Grants Council of the Hong Kong Special Administrative Region, China (GRF Project No. 15303720) and the Research Center for Intelligent Operations Research. 

\putbib
\end{bibunit}

\ifCLASSOPTIONcaptionsoff
  \newpage
\fi

\begin{IEEEbiography}[{\includegraphics[width=0.8in,height=1in,clip,keepaspectratio]{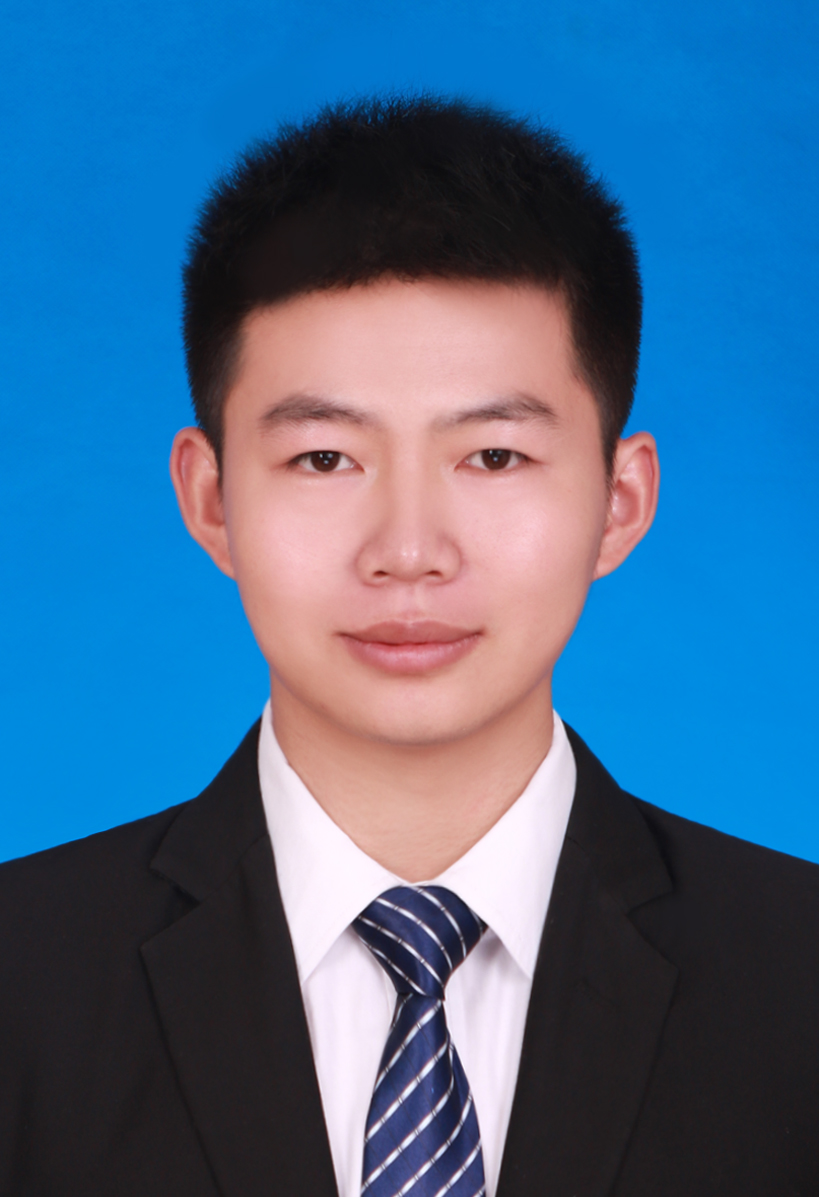}}]{Guojun Zhang} is a Ph.D. candidate in the Department of Applied Mathematics at Hong Kong Polytechnic University. His research focuses on optimization theory and complexity analysis, computational optimal transport, and software development. 
\end{IEEEbiography}

\begin{IEEEbiography}
[{\includegraphics[width=0.8in,height=1in,clip,keepaspectratio]{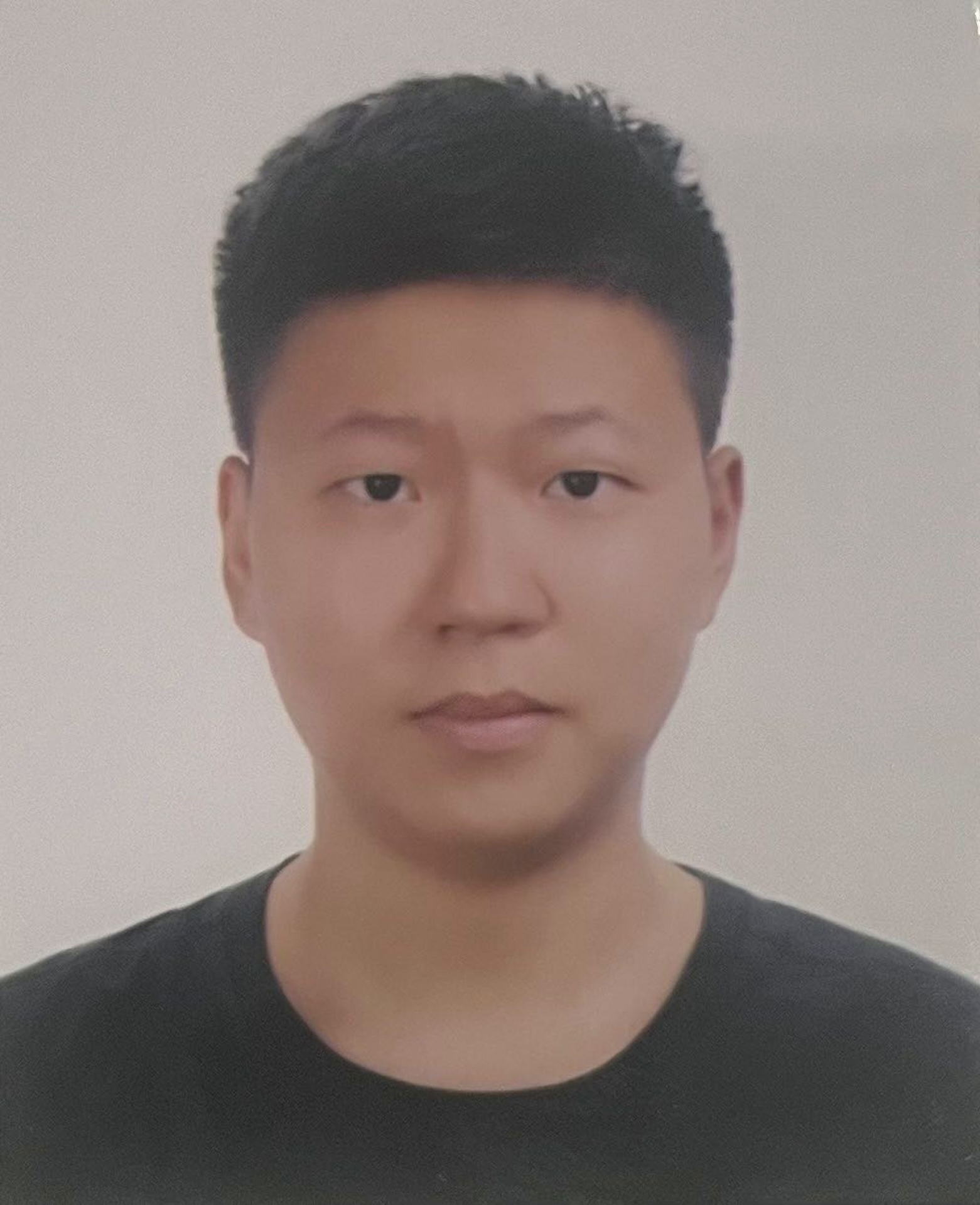}}]{Zhexuan Gu} is currently a Ph.D. student at The Hong Kong Polytechnic University. He 
received his Master's degree in the Department of Applied Mathematics at The Hong Kong Polytechnic University.
His current research interests include computational optimal transport, the foundations of artificial intelligence, and its applications in healthcare and beyond.
\end{IEEEbiography}

\begin{IEEEbiography}[{\includegraphics[width=0.8in,height=1in,clip,keepaspectratio]{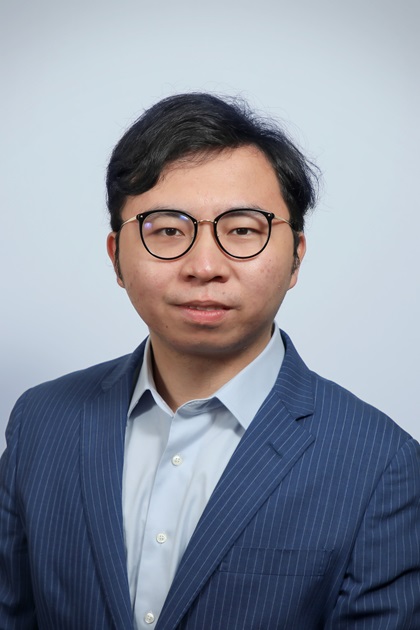}}]{Yancheng Yuan}
is an Assistant Professor at the Department of Applied Mathematics, The Hong Kong Polytechnic University. He received his PhD in Mathematics from the National University of Singapore. He was a research fellow at the NExT Research Center, National University of Singapore, mentored by Prof. Tat-Seng Chua. His research focuses on optimization theory, algorithm design and software development, the mathematical foundation of data science, and data-driven applications. His research has been published in prestigious academic journals and conferences, including Journal of Machine Learning Research, SIAM Journal on Optimization, IEEE Transactions on Neural Networks and Learning Systems, NeurIPS, ICML, WWW, SIGIR, ACL. 
\end{IEEEbiography}

\begin{IEEEbiography}[{\includegraphics[width=0.8in,height=1in,clip,keepaspectratio]{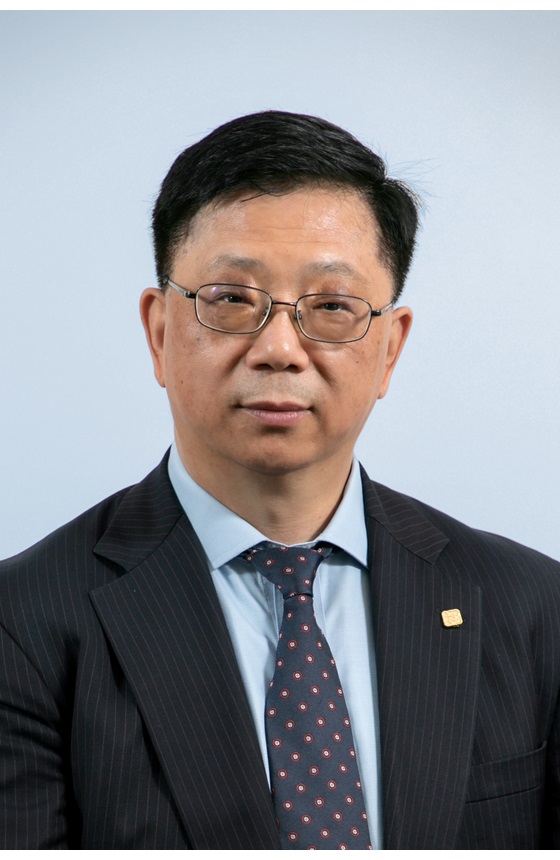}}]{Defeng Sun}
is currently Chair Professor of Applied Optimization and Operations Research at the Hong Kong Polytechnic University. He was the President of the Hong Kong Mathematical Society. He mainly publishes in continuous optimization and machine learning. Together with Professor Kim-Chuan Toh and Dr Liuqin Yang, he was awarded the triennial 2018 Beale--Orchard-Hays Prize for Excellence in Computational Mathematical Programming by the Mathematical Optimization Society. In 2020, he was elected as a Fellow of the societies CSIAM and SIAM. In 2022, he received the  RGC Senior Research Fellow Scheme award.
\end{IEEEbiography}

\newpage
\newpage

\newpage

\appendices

\twocolumn[\section*{\huge Supplementary materials for HOT: An Efficient Halpern Accelerating Algorithm for Optimal Transport Problems}
~\\

]

\begin{bibunit}

\counterwithout{table}{section}     
\setcounter{table}{0}               
\counterwithout{figure}{section}    
\setcounter{figure}{0}              
\setcounter{page}{1}

In the appendix, we first establish the equivalence between the HOT algorithm and the accelerated degenerate proximal point algorithm (dPPA) \cite{sun2024accelerating} in Appendix \ref{sec-equivlance}. Then, by analyzing the convergence and iteration complexity of the accelerated dPPA, we derive the global convergence result (Proposition \ref{prop:global-convergence}) and the complexity result (Proposition \ref{prop:complexity}) for the HOT algorithm in Appendices \ref{proof-prop2} and \ref{proof-prop-3}, respectively.  Additional numerical results, including transport plan recovery time, GPU vs. CPU comparisons, and the acceleration effects of Halpern iteration for high-accuracy solutions, are provided in Appendix \ref{more-DOT}. Further comparisons between the HOT algorithm and the W2NeuralDual method from the OTT-JAX library \cite{cuturi2022optimal} can be found in Appendix \ref{comp_W2NeuralDual}. Moreover, we also present an application of HOT for domain adaptation in Appendix \ref{domain-adaptation}. Finally, additional examples of color transfer are included in Appendix \ref{more_application}.

\section{Equivalence between the HOT and the accelerated dPPA}\label{sec-equivlance}
Let $\mathcal{N}_{\mathbb{R}_{+}^{N}}(\cdot)$ denote the normal cone of $\mathbb{R}_+^{N}$. Note that solving problems \eqref{model:standLP} and \eqref{model:dualLP} is equivalent to finding a $w^{*} \in {\mathbb{W}}=\mathbb{R}^{M_3} \times \mathbb{R}^{N} \times \mathbb{R}^{N}$ such that $0\in \cT w^{*}$, where the maximal monotone operator $\cT$ is defined by
	\begin{equation}\label{def:T}
		\cT w=\left(\begin{array}{c}
			-b +A {x}  \\
			\mathcal{N}_{\mathbb{R}_{+}^{N}}(z) +  x  \\
			c-A^{\top}{y}-{z} \\
		\end{array}     \right),  \quad  \forall w=(y,z,x)\in \mathbb{W}.
	\end{equation}
 Consider the following self-adjoint linear operator $\cM: \mathbb{W}  \rightarrow \mathbb{W}$,
	\begin{equation}\label{def:M}
		\cM=\left[\begin{array}{ccc}
			0  &\quad  0  \quad &\quad  0 \\
			0 &\quad \sigma I_{N}  & I_{N} \\
			0 &\quad  I_{N} & \quad \frac{1}{\sigma} I_{N}
		\end{array}\right],
\end{equation}
where $I_{N}$ denotes the identity matrix in $\mathbb{R}^{N\times N}$. We can establish the equivalence between the HOT algorithm and the accelerated dPPA \cite{sun2024accelerating} in the following proposition.

\begin{proposition}\label{prop:equ-acc-PADMM-acc-dPPM}
 Consider {the operators} $\cT$ defined in \eqref{def:T} and $\cM$ defined in \eqref{def:M}. Then the sequence $\left\{w^k\right\}$ generated by the HOT algorithm in Algorithm \ref{alg:HOT} coincides with the sequence $\left\{w^k\right\}$ generated by the following accelerated dPPA for any $k\geq 0$,
\begin{equation}\label{alg:acc-dPPA}
  \left\{\begin{array}{l}
    \displaystyle \bw^{k} =\hcT w^k=  (\cM+\cT)^{-1}\cM w^{k},  \\
    \displaystyle \hw^{k} =\hcF w^k=  2\bw^{k}-{w}^{k} ,  \\
    \displaystyle w^{k+1}=\frac{1}{k+2} w^{0}+\frac{k+1}{k+2}\hw^{k},
\end{array}\right.  
\end{equation}
with the same initial point $ w^0 \in \mathbb{W}$. Additionally, $\cM$ is an admissible preconditioner\footnote{In \cite{bredies2022degenerate}, an admissible preconditioner for the operator $\cT: \mathbb{W} \rightarrow 2^{\mathbb{W}}$ is a linear, bounded, self-adjoint, and positive semidefinite operator $\cM: \mathbb{W}\rightarrow\mathbb{W}$ such that $(\cM+\cT)^{-1}\cM$ is single-valued and has full domain.} such that $(\cM + \cT )^{-1}$ is Lipschitz continuous.
\end{proposition}
\begin{proof}
We establish that \( (\cM+\cT)^{-1} \) is single-valued by contradiction. Suppose, for the sake of contradiction, that \( (\cM+\cT)^{-1} \) is not single-valued. Then, there exist distinct points \( \bw_1 = (\by_1, \bz_1, \bx_1) \in \mathbb{W} \) and \( \bw_2 = (\by_2, \bz_2, \bx_2) \in \mathbb{W} \) such that  
\[
\bw_1, \bw_2 \in (\cM+\cT)^{-1} v
\]
for some \( v = (v_y, v_z, v_x) \in \mathbb{W} \). This implies that for \( i = 1,2 \), the following conditions hold:  
\begin{eqnarray}
    && v_y = -b + A \bx_i, \label{lem:inveritble-1}\\
    && v_z \in \mathcal{N}_{\mathbb{R}_{+}^{N}}(\bz_i) + \sigma \bz_i + 2\bx_i, \label{lem:inveritble-2}\\
    && v_x = c - A^{\top} \by_i + \sigma^{-1} \bx_i. \label{lem:inveritble-3}
\end{eqnarray}
By direct calculations, we obtain:  
\begin{eqnarray}
    && A(\bx_1 - \bx_2) = 0, \label{lem:inveritble-4}\\
    && \langle -\sigma(\bz_1 - \bz_2) - 2(\bx_1 - \bx_2), \bz_1 - \bz_2 \rangle \geq 0, \label{lem:inveritble-5}\\
    && \sigma A^{\top}(\by_1 - \by_2) = \bx_1 - \bx_2, \label{lem:inveritble-6}
\end{eqnarray}
where inequality \eqref{lem:inveritble-5} follows from the monotonicity of \( \mathcal{N}_{\mathbb{R}_{+}^{N}}(\cdot) \).  Substituting \eqref{lem:inveritble-6} into \eqref{lem:inveritble-4}, we obtain  
\[
\sigma A A^{\top} (\by_1 - \by_2) = 0.
\]
Since \( A A^{\top} \) is invertible, it follows that  
\begin{equation}\label{lem:inveritble-7}
    \by_1 - \by_2 = 0.
\end{equation}
Substituting \eqref{lem:inveritble-7} into \eqref{lem:inveritble-6}, we deduce that  
\begin{equation}\label{lem:inveritble-8}
    \bx_1 - \bx_2 = 0.
\end{equation}
Similarly, we conclude from  \eqref{lem:inveritble-5} and \eqref{lem:inveritble-8} that  
\begin{equation}\label{lem:inveritble-9}
    \bz_1 - \bz_2 = 0.
\end{equation}
Thus, equations \eqref{lem:inveritble-7}, \eqref{lem:inveritble-8}, and \eqref{lem:inveritble-9} imply that \( \bw_1 = \bw_2 \), contradicting our assumption. Therefore, we conclude that \( (\cM+\cT)^{-1} \) is single-valued.  

To establish the Lipschitz continuity of \( (\cM+\cT)^{-1} \), consider \( \bw_i = (\by_i, \bz_i, \bx_i) \in \mathbb{W} \) such that  
\[
\bw_i = (\cM+\cT)^{-1} v_i, \quad \text{where} \quad v_i = (v_{iy}, v_{iz}, v_{ix}), \quad i=1,2.
\]
Following the derivations in \eqref{lem:inveritble-1}–\eqref{lem:inveritble-3}, we obtain for \( i = 1,2 \):
\begin{eqnarray}
    && v_{iy} = -b + A \bx_i, \label{lem:inveritble-iy}\\
    && v_{iz} \in \mathcal{N}_{\mathbb{R}_{+}^{N}}(\bz_i) + \sigma \bz_i + 2\bx_i,\\
    && v_{ix} = c - A^{\top} \by_i + \sigma^{-1} \bx_i. \label{lem:inveritble-ix}
\end{eqnarray}
Substituting \eqref{lem:inveritble-ix} into \eqref{lem:inveritble-iy} and applying \cite[Corollary 23.5.1]{rockafellar1970convex}, we obtain:
\begin{eqnarray*}
    && \by_i = (AA^{\top})^{-1} \left( \frac{v_{iy} + b}{\sigma} - A(v_{ix} - c) \right),\\
    && \bz_i = (\mathcal{N}_{\mathbb{R}^{N}_+}(\cdot) + \sigma I_{N})^{-1} (v_{iz} - 2\bx_i).
\end{eqnarray*}
Thus, there exists a positive constant \( L_1 \) such that  
\begin{equation}\label{lem:inveritble-11}
    \begin{aligned}
        \|\by_1 - \by_2\| &= \Big\|(AA^{\top})^{-1} \big( \frac{v_{1y} - v_{2y}}{\sigma} - A (v_{1x} - v_{2x}) \big) \Big\| \\
        &\leq L_1 (\|v_{1y} - v_{2y}\| + \|v_{1x} - v_{2x}\|).
    \end{aligned}
\end{equation}
It follows from \eqref{lem:inveritble-ix} and \eqref{lem:inveritble-11} that there exists a positive constant \( L_2 \) such that  
\begin{equation}\label{lem:inveritble-12}
    \begin{aligned}
        \|\bx_1 - \bx_2\| &= \|\sigma (v_{1x} - v_{2x} + A^{\top}(\by_1 - \by_2))\| \\
        &\leq L_2 (\|v_{1x} - v_{2x}\| + \|v_{1y} - v_{2y}\|).
    \end{aligned}
\end{equation}
Similarly, since the resolvent \( (\mathcal{N}_{\mathbb{R}^{N}_+}(\cdot) + \sigma I_{N})^{-1} \) is non-expansive \cite{rockafellar1998variational}, there also exists a positive constant \( L_3 \) such that  
\begin{equation}\label{lem:inveritble-13}
    \|\bz_1 - \bz_2\| \leq L_3 (\|v_{1y} - v_{2y}\| + \|v_{1z} - v_{2z}\| + \|v_{1x} - v_{2x}\|).
\end{equation}
Hence, combining \eqref{lem:inveritble-11}, \eqref{lem:inveritble-12}, and \eqref{lem:inveritble-13}, we conclude that there exists a positive constant \( L \) such that  
\[
\|\bw_1 - \bw_2\| \leq L \|v_1 - v_2\|,
\]
which establishes the Lipschitz continuity of \( (\cM+\cT)^{-1} \).

Finally, following the proof establishing the equivalence between the semi-proximal alternating direction method  
of multipliers (spADMM) and the (partial) PPA, as outlined in Appendix B of \cite{yang2025accelerated},  
we obtain that the sequence \( \{w^k\} \) generated by Algorithm \ref{alg:HOT} coincides with the sequence \( \{w^k\} \)  
produced by the following scheme:
\begin{equation*}
    \cM w^k  \in (\cM+\cT) \bw^k, \quad 
    w^{k+1}=\frac{1}{k+2} w^{0}+\frac{k+1}{k+2}(2\bw^{k}-{w}^{k}).
\end{equation*}
Since \( (\cM+\cT)^{-1} \) is single-valued from the previous proof, it follows that
\begin{equation*}
    \bw^k = (\cM+\cT)^{-1} \cM w^k, \quad 
    w^{k+1} = \frac{1}{k+2} w^{0} + \frac{k+1}{k+2}(2\bw^{k} - {w}^{k}).
\end{equation*}
For an arbitrary choice of \( w^k \in \mathbb{W} \), each step in Algorithm \ref{alg:HOT} is well-defined.  
Thus, based on the established equivalence, we conclude that \( (\cM+\cT)^{-1} \cM \) has full domain.  
Combining this result with the Lipschitz continuity proven earlier, we deduce that \( \cM \) is an admissible  
preconditioner such that \( (\cM + \cT )^{-1} \) is Lipschitz continuous, completing the proof.
\end{proof}

Let \( \mathcal{C}^{\top} := (0, \sqrt{\sigma} I_{N}, \frac{1}{\sqrt{\sigma}} I_N) \). It is straightforward to verify that \( \mathcal{M} = \mathcal{C} \mathcal{C}^{\top} \). Define  
\begin{equation}\label{def:tcT&tcF}
    \tcT := \mathcal{C}^{\top} (\mathcal{M} + \cT)^{-1} \mathcal{C}, \quad 
    \tcF := 2\tcT - I_{\mathbb{W}},
\end{equation}
where \( I_{\mathbb{W}} \) denotes the identity operator on \( \mathbb{W} \). The following proposition summarizes some key properties of \( \tcT \) and \( \tcF \).

\begin{proposition}\label{prop:tcT}	
Consider the operators \( \cT \) defined in \eqref{def:T} and \( \cM \) defined in \eqref{def:M}. Then, the operator \( \widetilde{\cT} \) in \eqref{def:tcT&tcF} is everywhere well-defined and firmly nonexpansive. Moreover, the operator  $\tcF = 2\tcT - I_{\mathbb{W}}$ is nonexpansive. Furthermore, we have the equivalence  
\[
\mathcal{C}^{\top} \cT^{-1}(0) = \mathcal{C}^{\top} \operatorname{Fix} \hcT = \operatorname{Fix} \tcT = \operatorname{Fix} \tcF,
\]
where \( \operatorname{Fix} \hcT \) denotes the set of fixed points of the operator \( \hcT \).
\end{proposition}
\begin{proof}
The firm nonexpansiveness of \( \tcT \) follows from Theorem 2.13 in \cite{bredies2022degenerate}. Furthermore, by \cite[Proposition 4.4]{bauschke2017convex}, \( \tcF \) is nonexpansive. Finally, from the proof of Theorem 2.14 in \cite{bredies2022degenerate}, we obtain  
\[
\mathcal{C}^{\top} \operatorname{Fix} \hcT = \operatorname{Fix} \tcT.
\]  
Since \( \cT^{-1}(0) = \operatorname{Fix} \hcT \), the result follows.
\end{proof}

To analyze the global convergence and iteration complexity of the HOT algorithm (Algorithm \ref{alg:HOT}),  
we introduce two shadow sequences \( \{u^k\} \) and \( \{\bu^k\} \), defined as  
\begin{equation}\label{alg:u}
    u^{k} := \cC^{\top}w^{k}, \quad \bu^k := \cC^{\top}\bw^{k}, \quad \forall k \geq 0,
\end{equation}
where \( \{w^k\} \) and \( \{\bw^k\} \) are the sequences generated by Algorithm \ref{alg:HOT}.  
Applying Proposition \ref{prop:equ-acc-PADMM-acc-dPPM}, we obtain the following identity:
\begin{equation}\label{alg:u-acc}
    u^{k+1} = \frac{1}{k+2} u^{0} + \frac{k+1}{k+2} \tcF u^k, \quad \forall k \geq 0.
\end{equation}
We are now ready to prove Proposition \ref{prop:global-convergence}.

\section{Proof of Proposition \ref{prop:global-convergence}}\label{proof-prop2}
\begin{proof}
Note that the scheme in \eqref{alg:u-acc} is the Halpern iteration applied to the nonexpansive operator $\tcF$. It follows from the global convergence of the Halpern iteration in \cite[Theorem 2]{wittmann1992approximation} that
		\begin{equation}\label{Th:convergence-u}
			u^{k}\rightarrow u^*,
		\end{equation}
where $u^*$ is a point in $\operatorname{Fix} \tcF$. By utilizing Proposition \ref{prop:tcT}, we have
		$$
		\mathcal{C}^{\top}\mathcal{T}^{-1}(0)=\mathcal{C}^{\top} \operatorname{Fix} \widehat{\mathcal{T}}=\operatorname{Fix} \tcT = \operatorname{Fix} \tcF,
		$$
		which implies that there exists a $w^{*}$ in $\mathcal{T}^{-1}(0)$ such that $\mathcal{C}^{\top}w^{*}=u^*$. Consequently, by the relationship between $\{u^k\}$ and $\{w^k\}$ in \eqref{alg:u}, and \eqref{Th:convergence-u}, we can obtain
		\begin{equation}\label{Th:convergence-barw}
        \begin{array}{ll}
           	\bar{w}^k&=(\mathcal{M}+\mathcal{T})^{-1}\mathcal{C}\mathcal{C}^{*}w^{k}=(\mathcal{M}+\mathcal{T})^{-1}\mathcal{C} u^{k} \\
            &\rightarrow (\mathcal{M}+\mathcal{T})^{-1} \mathcal{C}\mathcal{C}^{\top}w^{*}=(\mathcal{M}+\mathcal{T})^{-1}\cM w^{*}=w^{*},
        \end{array}
		\end{equation}
		where the continuity of $(\mathcal{M}+\mathcal{T})^{-1}\mathcal{C}$ is derived from the composition of a continuous function $(\mathcal{M}+\mathcal{T})^{-1}$ showed in Proposition \ref{prop:equ-acc-PADMM-acc-dPPM} and a linear operator $\mathcal{C}$. Hence, $\{\bar{w}^k\}$ converges to $w^*$, which completes the proof. 
\end{proof}

\section{Proof of Proposition \ref{prop:complexity}}\label{proof-prop-3}
\begin{proof}
The shadow sequence \( \{u^k\} \) satisfying \eqref{alg:u-acc} corresponds exactly to the Halpern iteration. By Proposition \ref{prop:tcT}, we know that \( \tcF \) is nonexpansive. Applying \cite[Theorem 2.1]{lieder2021convergence}, we obtain  
\begin{equation}\label{prop:complexity-acc-dPPM-1}
    \|u^{k} - \tcF u^k\| \leq \frac{2\|u^0 - u^*\|}{k+1},  
    \quad \forall k \geq 0, \quad u^* \in \operatorname{Fix} \tcF.
\end{equation}
Furthermore, by Proposition \ref{prop:tcT}, we have \( \operatorname{Fix} \tcF = \cC^{*} \cT^{-1}(0) \).  
Thus, for any \( u^* \in \operatorname{Fix} \tcF \), there exists a point \( w^{*} \in \cT^{-1}(0) \) such that \( \cC^{*} w^{*} = u^{*} \).  
Substituting this into \eqref{prop:complexity-acc-dPPM-1}, we obtain, for all \( k \geq 0 \) and \( w^* \in \cT^{-1}(0) \),
\begin{equation*}
    \|\cC^{*} w^{k} - \cC^{*} \hcF w^k\| \leq \frac{2\|\cC^{*} w^{0} - \cC^{*} w^{*}\|}{k+1},  
\end{equation*}
which implies
\begin{equation}\label{complexity:acc-dPPA}
    \|w^{k} - \hat{w}^{k}\|_{\cM} = \|w^{k} - \hcF w^k\|_{\cM} 
    \leq \frac{2\|w^{0} - w^{*}\|_{\cM}}{k+1}
\end{equation}
with the seminorm  defined by  $\|w\|_{\mathcal{M}} := \sqrt{\langle w, w\rangle_{\mathcal{M}}} = \sqrt{\langle w, \cM w\rangle}$.

We now estimate the convergence rate of \( \mathcal{R}(\bw^k) \) for any \( k \geq 0 \). From \eqref{complexity:acc-dPPA}, we obtain  
\[
\|\hat{w}^{k} - w^{k}\|_{\cM}^{2} \leq \frac{4\|w^0-w^*\|_{\cM}^2}{(k+1)^2}, \quad \forall k \geq 0.
\]
By the definition of \( \cM \) in \eqref{def:M}, this can be rewritten as  
\begin{equation}\label{Th:complexity-acc-pADMM-1}
    \frac{1}{\sigma} \|\sigma (\hat{z}^{k} - z^{k}) + (\hx^{k} - x^{k})\|^{2} \leq \frac{4R_0^2}{ \sigma(k+1)^2}, \quad \forall k \geq 0.
\end{equation}
From Step 2 of the accelerated dPPA scheme in \eqref{alg:acc-dPPA}, we obtain, for any \( k \geq 0 \),
\begin{equation*}
    \begin{cases}
        \hy^{k} - y^{k} = 2(\by^{k} - y^{k}), \\
        \hz^{k} - z^{k} = 2(\bz^{k} - z^{k}), \\
        \hx^{k} - x^{k} = 2(\bx^{k} - x^{k}).
    \end{cases}
\end{equation*}
Thus, we can rewrite \eqref{Th:complexity-acc-pADMM-1} as  
\begin{equation}\label{Th:complexity-acc-pADMM-2}
    \frac{1}{\sigma} \|\sigma (\bz^{k} - z^{k}) + (\bx^{k} - x^{k})\|^{2} \leq \frac{R_0^2}{\sigma(k+1)^2}, \quad \forall k \geq 0.
\end{equation}
From Step 2 of Algorithm \ref{alg:HOT}, we deduce that for any \( k \geq 0 \),
\begin{equation*}
    \begin{array}{ll}
        &\|\sigma (\bz^{k} - z^{k}) + (\bx^{k} - x^{k})\| \\
        &= \|\sigma (\bz^{k} - z^{k}) + \sigma (A^{\top} \by^{k} + z^{k} - c)\| \\
        &= \sigma \|A^{\top} \by^{k} + \bz^{k} - c\|,
    \end{array}
\end{equation*}
which, combined with \eqref{Th:complexity-acc-pADMM-2}, yields  
\begin{equation}\label{Th:complexity-acc-pADMM-3}
    \|A^{\top} \by^{k} + \bz^{k} - c\| \leq \frac{R_0}{{\sigma}(k+1)}, \quad \forall k \geq 0.
\end{equation}
Moreover, from the optimality conditions of the subproblems in Algorithm \ref{alg:HOT}, we obtain, for any \( k \geq 0 \),
\begin{equation}\label{Th:complexity-acc-pADMM-4}
    \begin{cases}
        AA^{\top} \bar{y}^k = \frac{b}{\sigma} - A \left(\frac{x^{k}}{\sigma} + z^{k} - c\right), \\ 
        \bar{z}^{k} = \Pi_{\mathbb{R}_{+}^{N}}\left( \bz^{k} - \bx^k - \sigma (A^{\top} \bar{y}^k + \bz^k - c)\right).
    \end{cases}
\end{equation}
Thus, from Step 2 of Algorithm \ref{alg:HOT} and \eqref{Th:complexity-acc-pADMM-4}, we have  
\begin{equation}\label{Th:complexity-acc-pADMM-5}
    \begin{array}{ll}
        &\|b - A \bx^{k}\| \\
        &= \| b - A (x^{k} + \sigma (A^{\top} \by^k + z^k - c)) \| \\
        &= \| b - A (x^{k} + \sigma (z^k - c)) - (b - A (x^{k} + \sigma (z^k - c))) \| \\
        &= 0.
    \end{array}
\end{equation}
Similarly, from \eqref{Th:complexity-acc-pADMM-3} and \eqref{Th:complexity-acc-pADMM-4}, we obtain, for any \( k \geq 0 \),
\begin{equation}\label{Th:complexity-acc-pADMM-6}
    \begin{array}{ll}
        \|\bz^{k} - \Pi_{\mathbb{R}_{+}^{N}}(\bz^k - \bx^k)\| 
        &\leq \sigma \|A^{\top} \by^{k} + \bz^{k} - c\| \\
        &\leq \frac{R_0}{(k+1)}.
    \end{array}
\end{equation}
Therefore, by \eqref{Th:complexity-acc-pADMM-3}, \eqref{Th:complexity-acc-pADMM-5}, and \eqref{Th:complexity-acc-pADMM-6}, we conclude that for any \( k \geq 0 \),
\begin{equation*}
        \| \mathcal{R}(\bar{w}^k)\| \leq   \frac{\sigma + 1}{\sigma} \frac{R_0}{(k+1)}.
\end{equation*}

We now estimate the objective errors. For any \( k \geq 0 \), define  
\begin{equation}\label{def:tx}
    \tilde{x}^k = \bx^k + \sigma (A^{\top} \by^k + \bz^k - c).
\end{equation}
From the second equation in \eqref{Th:complexity-acc-pADMM-4}, it follows that  
\begin{equation}\label{def:tx-prop}
    \tilde{x}^k \in \mathbb{R}^{N}_+, \quad \text{and} \quad \langle \tilde{x}^k, \bz^k \rangle = 0. 
\end{equation}
Thus, by the convexity of \( \delta_{\mathbb{R}^{N}_+}(\cdot) \) and the KKT system \eqref{def:KKT}, we obtain  
\[
\delta_{\mathbb{R}^{N}_+}(\tilde{x}^k) \geq \delta_{\mathbb{R}^{N}_+}(x^{*}) + \langle -z^{*}, \tilde{x}^k - x^* \rangle.
\]
Adding \( \langle c, \bx^k - x^{*} \rangle \) to both sides and applying \eqref{Th:complexity-acc-pADMM-3}, \eqref{Th:complexity-acc-pADMM-5}, and \eqref{def:tx}, we derive  
\[
\begin{aligned}
    \langle c, \bx^k - x^{*} \rangle 
    &\geq \langle c, \bx^k - x^{*} \rangle + \langle -z^{*}, \tilde{x}^k - x^* \rangle \\
    &= \langle c - z^{*}, \bx^k - x^{*} \rangle + \langle -z^{*}, \sigma (A^{\top} \by^k + \bz^k - c) \rangle \\
    &= \langle A^{\top} y^*, \bx^k - x^{*} \rangle + \langle -z^{*}, \sigma (A^{\top} \by^k + \bz^k - c) \rangle \\
    &= \langle -z^{*}, \sigma (A^{\top} \by^k + \bz^k - c) \rangle \\
    &\geq -\|z^{*}\| \frac{R_0}{k+1}.
\end{aligned}
\]
Similarly, from the second equation in \eqref{Th:complexity-acc-pADMM-4}, we obtain  
\[
-\bz^k \in \mathcal{N}_{\mathbb{R}^{N}_+}(\tilde{x}^k),
\]
which implies  
\[
\delta_{\mathbb{R}^{N}_+}(x^{*}) \geq \delta_{\mathbb{R}^{N}_+}(\tilde{x}^{k}) + \langle -\bz^{k}, x^{*} - \tilde{x}^k \rangle.
\]
Adding \( \langle c, x^* - \bx^{k} \rangle \) to both sides and using \eqref{def:tx-prop}, we obtain  
\begin{equation}\label{Th:complexity-acc-pADMM-7}
    \begin{aligned}
        \langle c, x^* - \bx^{k} \rangle 
        &\geq \langle -\bz^k, x^{*} - \tilde{x}^k \rangle + \langle c, x^* - \bx^{k} \rangle \\
        &= \langle -\bz^k, x^{*} \rangle + \langle c, x^* - \bx^{k} \rangle \\
        &= \langle c - \bz^k, x^{*} \rangle - \langle c, \bx^{k} \rangle.
    \end{aligned}
\end{equation}
For convenience, let  
\[
\Delta^k = c - (A^{\top} \by^k + \bz^k).
\]
Using \eqref{Th:complexity-acc-pADMM-7} and \eqref{Th:complexity-acc-pADMM-5}, we obtain  
\[
\begin{aligned}
    \langle c, \bx^k - x^{*} \rangle 
    &\leq - \langle \Delta^k + A^{\top} \by^k, x^{*} \rangle + \langle \Delta^k + (A^{\top} \by^k + \bz^k), \bx^{k} \rangle \\
    &= \langle \Delta^k, \bx^k - x^* \rangle + \langle \bz^k, \bx^k \rangle.
\end{aligned}
\]
By \eqref{def:tx} and \eqref{def:tx-prop}, we further derive  
\begin{equation}\label{Th:complexity-acc-pADMM-8}
    \begin{aligned}
        \langle c, \bx^k - x^{*} \rangle 
        &\leq \langle \Delta^k, \bx^k - x^* \rangle + \sigma \langle \bz^k, \Delta^k \rangle \\
        &= \langle \Delta^k, \bx^k - x^* + \sigma (\bz^k - z^{*}) \rangle + \sigma \langle z^*, \Delta^k \rangle \\
        &\leq \| \Delta^k \| \|\bx^k - x^* + \sigma (\bz^k - z^{*})\| + \sigma \|z^*\| \|\Delta^k\|.
    \end{aligned}
\end{equation}
By the definition of \( \cM \) in \eqref{def:M} and the \( \cM \)-nonexpansiveness of \( \hcT \) \cite{bredies2022degenerate}, we have  
\begin{equation}\label{Th:complexity-acc-pADMM-9}
    \begin{aligned}
        \|\bx^k - x^* + \sigma (\bz^k - z^{*})\|^2 
        &= \sigma \|\bw^k - w^{*}\|_{\cM}^2 \\
        &\leq \sigma \|w^0 - w^{*}\|_{\cM}^2 \\
        &= \|x^0 - x^* + \sigma (z^0 - z^{*})\|^2 = R_0^2.
    \end{aligned}
\end{equation}
Therefore, combining \eqref{Th:complexity-acc-pADMM-3}, \eqref{Th:complexity-acc-pADMM-8}, and \eqref{Th:complexity-acc-pADMM-9}, we conclude that  
\[
\begin{aligned}
    \langle c, \bx^k - x^{*} \rangle 
    &\leq \| \Delta^k \| R_0 + \sigma \|z^*\| \|\Delta^k\| \\
    &\leq (\sigma \|z^*\| + R_{0}) \frac{R_0}{\sigma (k+1)}.
\end{aligned}
\]
This completes the proof.
\end{proof}

\section{More numerical results on the DOTmark dataset}\label{more-DOT}
Note that the worst-case computational complexity of reconstructing the transport plan via  Algorithm~\ref{alg:transportplan} is $3M^2$. In practice, we can efficiently parallelize the $(k, j)$ loop to leverage the significant benefits of GPU acceleration, enabling an efficient reconstruction of the transport plan from a solution to the reduced OT model. We provide some additional numerical experiment results in Table \ref{tab:recoveryresult} to better demonstrate the efficiency of the transport plan recovery with the GPU acceleration. The results shown in Table \ref{tab:recoveryresult} demonstrate that our approach outperforms other popular algorithms, even when accounting for the plan recovery time. We have also considered the sparsity in the transport plan recovery, which facilitates handling large-scale problems.

\begin{table*}[ht!]
    \caption{Numerical results of various algorithms on the DOTmark Dataset: Total time as the sum of solve time and transport plan recovery time.}
    \centering
    \resizebox{16cm}{!}{
        \centering
    \begin{tabular}{ccccccccc}
        \multicolumn{3}{c}{} & \multicolumn{1}{c}{} & \multicolumn{1}{c}{}  &
        \multicolumn{1}{c}{} & \multicolumn{1}{c}{} & \multicolumn{1}{c}{} & \multicolumn{1}{c}{}    \\
        \cline{1-9}
        Category    &   Resolution  &   & \textbf{HOT}  & Network Simplex  & Gurobi  & ADMM & Improved Sinkhorn & Sinkhorn \\
        \cline{1-9}
        \multirow{2}{*}{Classic} &   \multirow{1}{*}{$64 \times 64$}     & time(s)  & \textbf{0.67}+0.17 & 2.73 & 2.16+0.17 &  1.77+0.17 & 16.18 &  174.82                      \\
        \cdashline{2-9}
        &   \multirow{1}{*}{$128 \times 128$}     & time(s)  & \textbf{1.58}+0.67 & 36.18 & 29.15+0.67 & 3.53+0.67 & 39.40 &  2632.17                      \\
        \cline{1-9}
        \multirow{2}{*}{Shapes} &   \multirow{1}{*}{$64 \times 64$}     & time(s)  & \textbf{0.64}+0.17 & 1.48 & 1.33+0.17 & 3.92+0.17 & 9.60 & 103.74                   \\
        \cdashline{2-9}
        &   \multirow{1}{*}{$128 \times 128$}     & time(s)  & \textbf{1.68}+0.67 & 20.70 & 22.46+0.67 & 2.32+0.67 &  24.32 & 1616.34    \\
        \cline{1-9}
    \end{tabular}
    }
    \label{tab:recoveryresult}
\end{table*}

The numerical comparison of the HOT algorithm on GPU and CPU is presented in Table \ref{tab:cpu}. The results demonstrate that the GPU's acceleration is substantial, and the acceleration effect becomes more significant as the problem's dimension increases.
    \begin{table}[ht!]
    \centering
    \caption{The comparison of HOT's performance on CPU and GPU.}
    \resizebox{9cm}{!}{
    \begin{tabular}{cccccc}
        \multicolumn{3}{c}{} & \multicolumn{1}{c}{} & \multicolumn{1}{c}{}  & \multicolumn{1}{c}{}  \\
        \hline
        Category    &   Resolution  &   & \textbf{GPU}  & CPU & Ratio ($t_{\text{CPU}} / t_{\text{GPU}}$)  \\
        \hline
        \multirow{3}{*}{Classic} &   
        \multirow{1}{*}{$128 \times 128$}     & time(s)  & \textbf{1.58} & 5.91    &  \multirow{1}{*}{3.74}          \\
        \cdashline{2-6}
        &   \multirow{1}{*}{$256 \times 256$}     & time(s)  & \textbf{12.98} & 108.98   &  \multirow{1}{*}{8.40}          \\
        \cdashline{2-6}
        &   \multirow{1}{*}{$512 \times 512$}     & time(s)  & \textbf{81.02} & 764.22 &  \multirow{1}{*}{9.43}   \\
        \hline
        \multirow{3}{*}{Shapes} 
        &   \multirow{1}{*}{$128 \times 128$}     & time(s)  & \textbf{1.68} & 6.15  &  \multirow{1}{*}{3.66}                      \\
        \cdashline{2-6}
        &   \multirow{1}{*}{$256 \times 256$}     & time(s)  & \textbf{14.87} & 130.81 &  \multirow{1}{*}{8.80}                 \\
        \cdashline{2-6}
        &   \multirow{1}{*}{$512 \times 512$}     & time(s)  & \textbf{87.12} & 812.82 &   \multirow{1}{*}{9.33}     \\
        \hline
    \end{tabular}
    }
    \label{tab:cpu}
\end{table}

To better demonstrate the significance of the Halpern acceleration, we conduct additional numerical testing by changing the stopping criterion of the algorithm from relative KKT residual to absolute KKT residual and setting the maximum number of iterations to 1E6. The results are shown in Table \ref{tab:absKKT} below. 
    
    \begin{table}[ht!]
    \scriptsize
    \centering
    \caption{The numerical results of HOT and ADMM using absolute KKT residual as the stopping criterion (1E-6).}
    \resizebox{9cm}{!}{
    \begin{tabular}{ccccccc}
    \multicolumn{3}{c}{} & \multicolumn{1}{c}{} & \multicolumn{1}{c}{} & \multicolumn{1}{c}{}  & \multicolumn{1}{c}{} \\
    \hline
    Category & Resolution &  & HOT & ADMM   \\
    \hline 
    \multirow{11}{*}{Classic} & \multirow{4}{*}{$64 \times 64$} & time(s) & 6.39 & 406.94 \\ 
                              &                                 & gap & \textbf{6.93E-10} & 1.58E-6 \\
                              &                                 & feaserr & \textbf{3.12E-13} & 4.51E-10 \\
                              &                                 & iter & 17680 & 1000000  \\
    \cdashline{2-7} 
                              & \multirow{4}{*}{$128 \times 128$} & time(s) & 76.36 & 1093.10 \\ 
                              &                                 & gap & \textbf{8.10E-10} & 4.69E-6 \\
                              &                                 & feaserr & \textbf{2.82E-14} & 6.11E-10 \\
                              &                                 & iter & 57300 & 1000000 \\
    \cdashline{2-7} 
                              & \multirow{4}{*}{$256 \times 256$} & time(s) & 3774.04 & 9135.66 \\ 
                              &                                 & feaserr & \textbf{3.96E-15} & 9.31E-11\\
                              &                                 & iter & 338240 &  1000000\\
    \hline
    \multirow{11}{*}{Shapes}  & \multirow{4}{*}{$64 \times 64$} & time(s) & 6.34 & 404.29 \\ 
                              &                                 & gap & \textbf{2.29E-10} & 6.48E-7\\
                              &                                 & feaserr & \textbf{3.62E-13} & 8.78E-10\\
                              &                                 & iter & 17660 & 991730\\
    \cdashline{2-7} 
                              & \multirow{4}{*}{$128 \times 128$} & time(s) & 54.86 & 1087.06\\ 
                              &                                 & gap & \textbf{2.47E-9} & 2.43E-6 \\
                              &                                 & feaserr & \textbf{2.33E-14} & 1.01E-9\\
                              &                                 & iter & 41190 & 1000000\\
    \cdashline{2-7} 
                              & \multirow{4}{*}{$256 \times 256$} & time(s) & 2519.99 & 9101.29\\ 
                              &                                 & feaserr & \textbf{3.24E-15} & 1.10E-10\\
                              &                                 & iter & 226020 & 1000000 \\
    \hline
    \end{tabular}
    }
    \label{tab:absKKT}
    \end{table}

\section{Compare with W2NeuralDual} \label{comp_W2NeuralDual}
In this section, we compare HOT with the W2NeuralDual method implemented in OTT-JAX \cite{cuturi2022optimal}.
We first sample some data using a built-in OTT-JAX random dataset generator in two different settings, visualized in Fig. \ref{fig:ott_data}.  For the Circle Gaussian and the CheckerBoard dataset, we generate 128 data points for both the source and target measures, respectively. Then we compare the performance of the HOT algorithm and the neural network dual method with default parameters in the OTT-JAX library\footnote{\url{https://ott-jax.readthedocs.io/en/latest/neural/_autosummary/ott.neural.methods.neuraldual.W2NeuralDual.html}}.
    Specifically, we train the network using a batch size of 2048 and utilize the ``W2NeuralDual'' API without modifying any parameters (e.g., 20000 training epochs, Adam optimizers). Note that the ICNN potential, as the default parameter, is not suitable for training the CheckerBoard dataset (see Fig. \ref{fig:ICNNCheckBoard}). We manually modify it to MLP potential as suggested by OTT-JAX documentation\footnote{\url{https://ott-jax.readthedocs.io/en/latest/tutorials/neural/000_neural_dual.html}}.
    Here, we also use the 2D non-uniform bins constructed from the union of the supports in the source and target measure samples.
     We report the numerical results in Table \ref{tab:gaussianres}. From the results, we can see that, even though the W2NeuralDual is computationally efficient in the inference stage and is memory efficient, the training phase is computationally expensive. Moreover, the quality of the solution obtained by the HOT algorithm can be much better than the ones obtained from the W2NeuralDual. We further visualize the transport plans obtained by the HOT algorithm and the W2NeuralDual in Fig. \ref{fig: comp-JAX-results}.
    
    \begin{figure}[h!]
        \centering
        \begin{subfigure}[b]{0.48\textwidth}
         \centering
         \includegraphics[width=\textwidth]{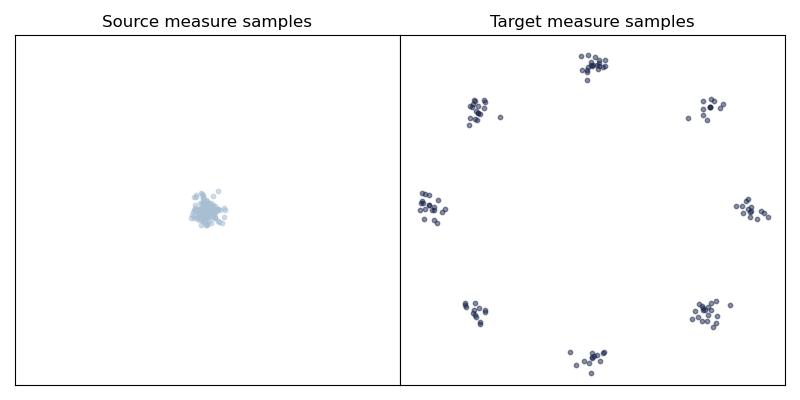}
         \caption{Circle Gaussian.}
     \end{subfigure}
         \begin{subfigure}[b]{0.48\textwidth}
         \centering
         \includegraphics[width=\textwidth]{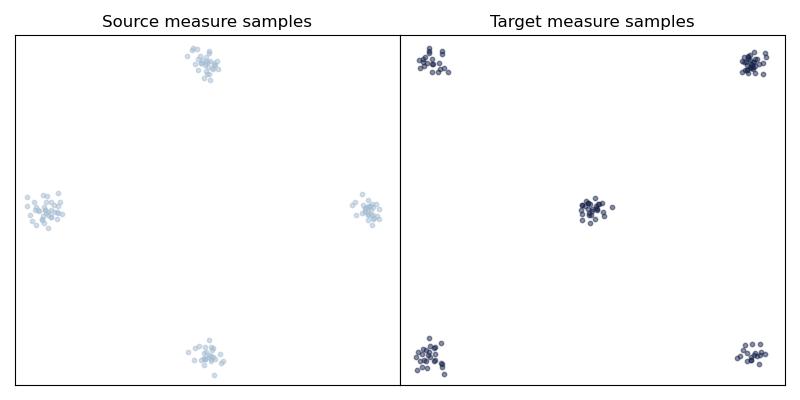}
         \caption{CheckerBoard.}
     \end{subfigure}
     \caption{A visualization of the sample dataset.}
     \label{fig:ott_data}
    \end{figure}

    \begin{figure}[h!]
        \centering
        \includegraphics[width=0.7\linewidth]{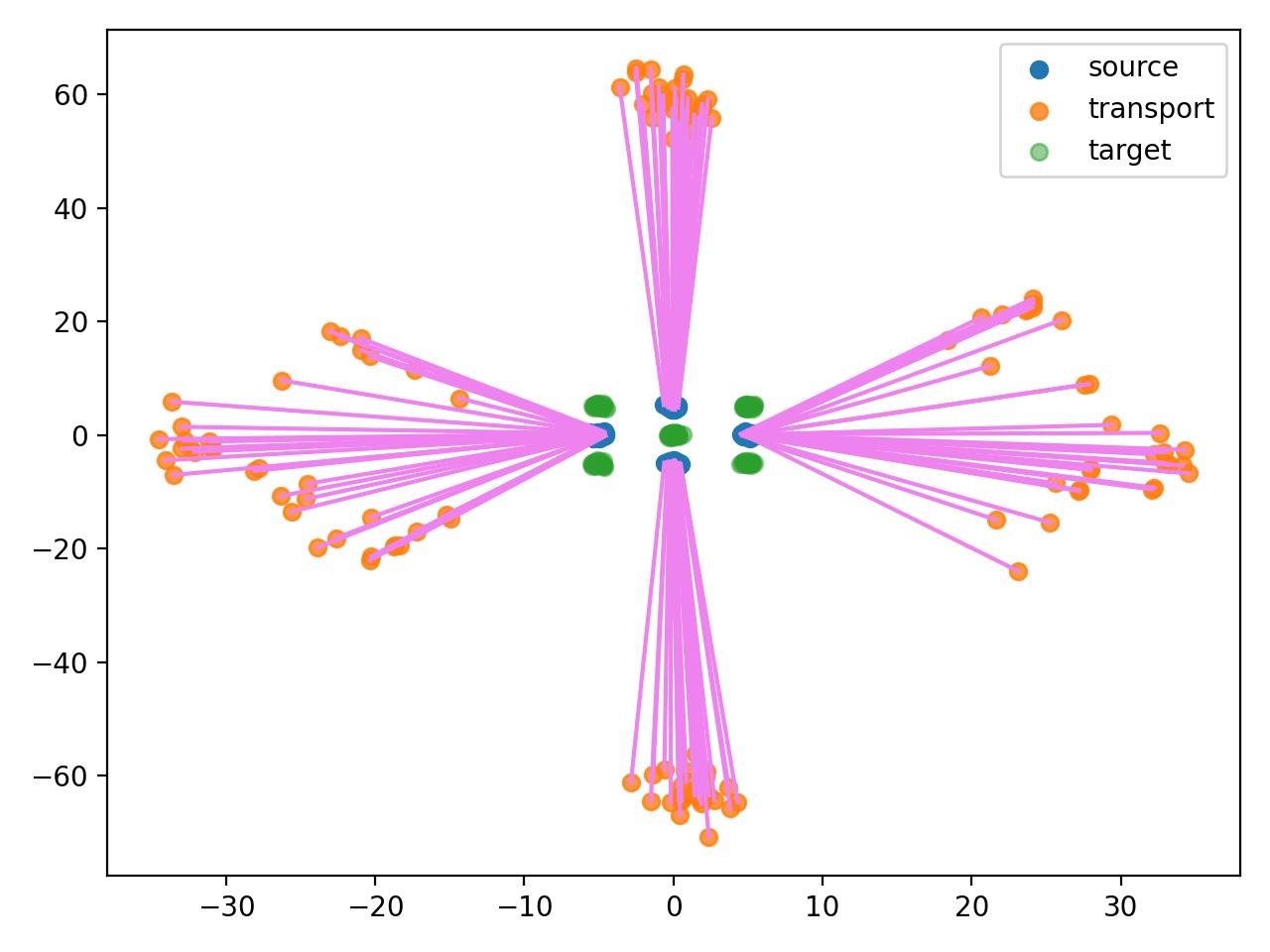}
        \caption{W2NeuralDual on CheckerBoard Using ICNN Potential with default parameter settings.}
        \label{fig:ICNNCheckBoard}
    \end{figure}

\begin{figure}[htbp]
    \centering
    \fbox{
    \begin{subfigure}[t]{0.2\textwidth}
        \centering
        \begin{subfigure}[t]{\textwidth}
            \centering
            \includegraphics[width=\textwidth]{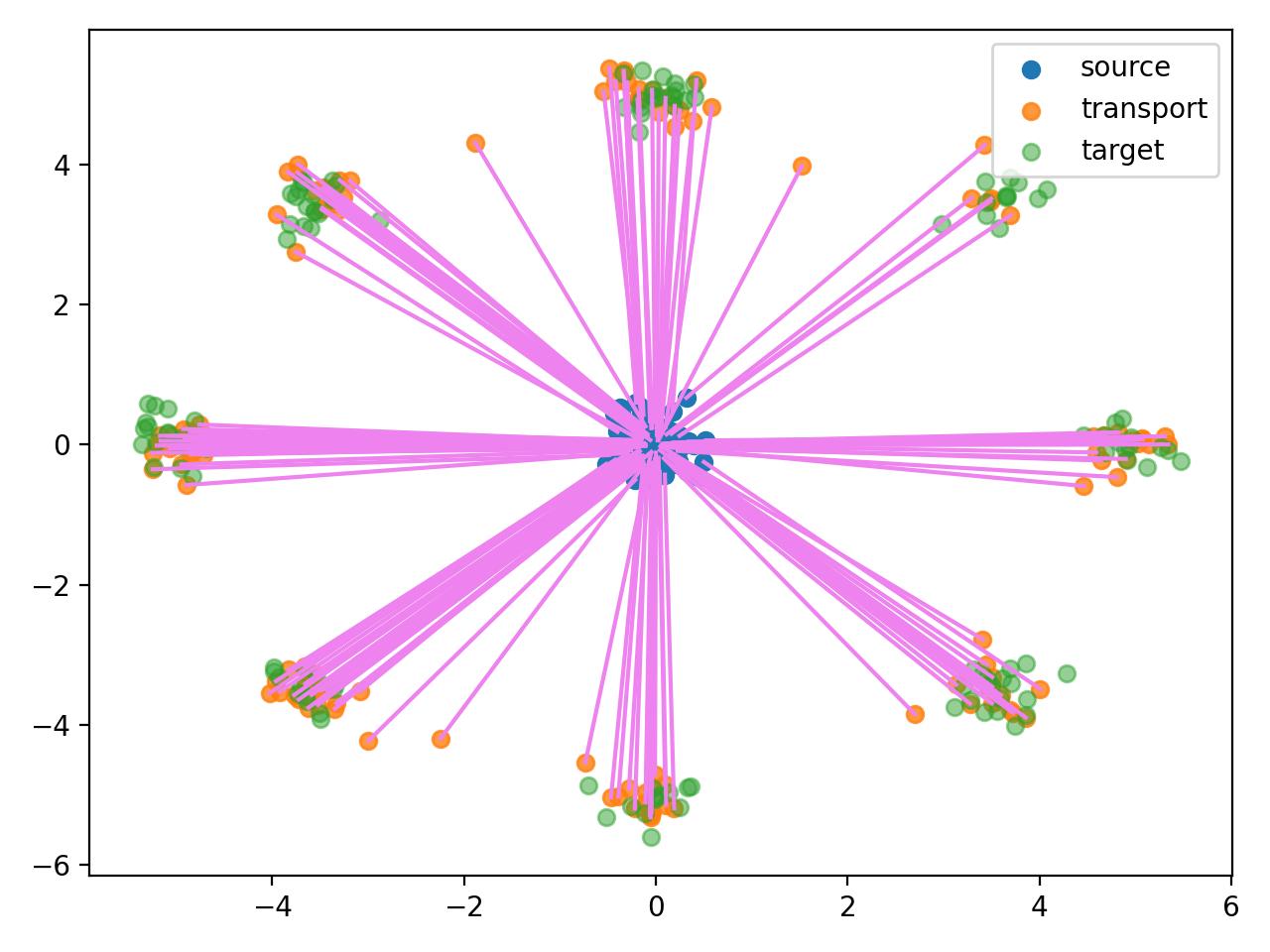}
            \caption*{W2NeuralDual}
        \end{subfigure}\\
        \begin{subfigure}[t]{\textwidth}
            \centering
            \includegraphics[width=\textwidth]{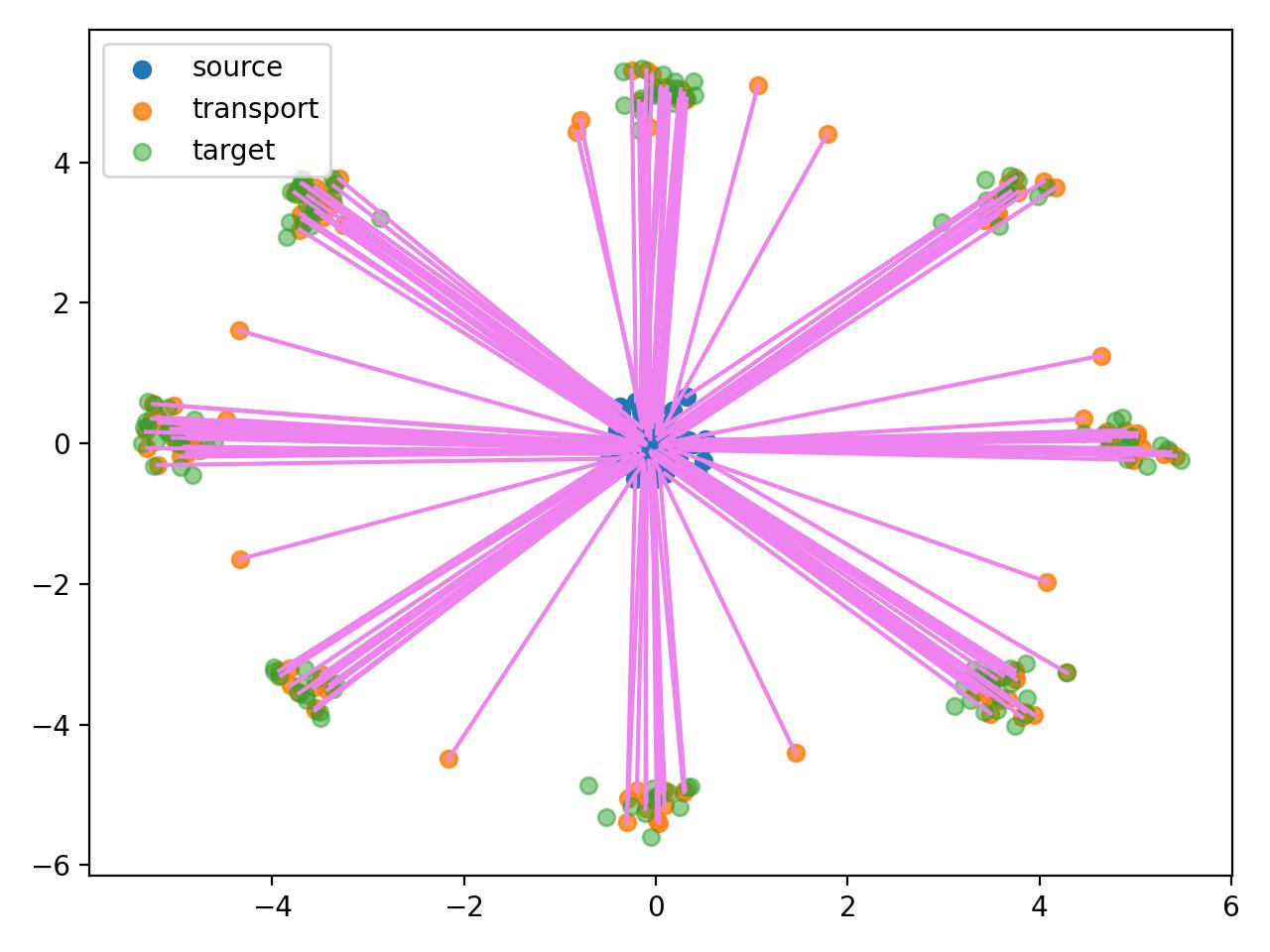}
            \caption*{HOT tol=1E-6}
        \end{subfigure}\\
        \begin{subfigure}[t]{\textwidth}
            \centering
            \includegraphics[width=\textwidth]{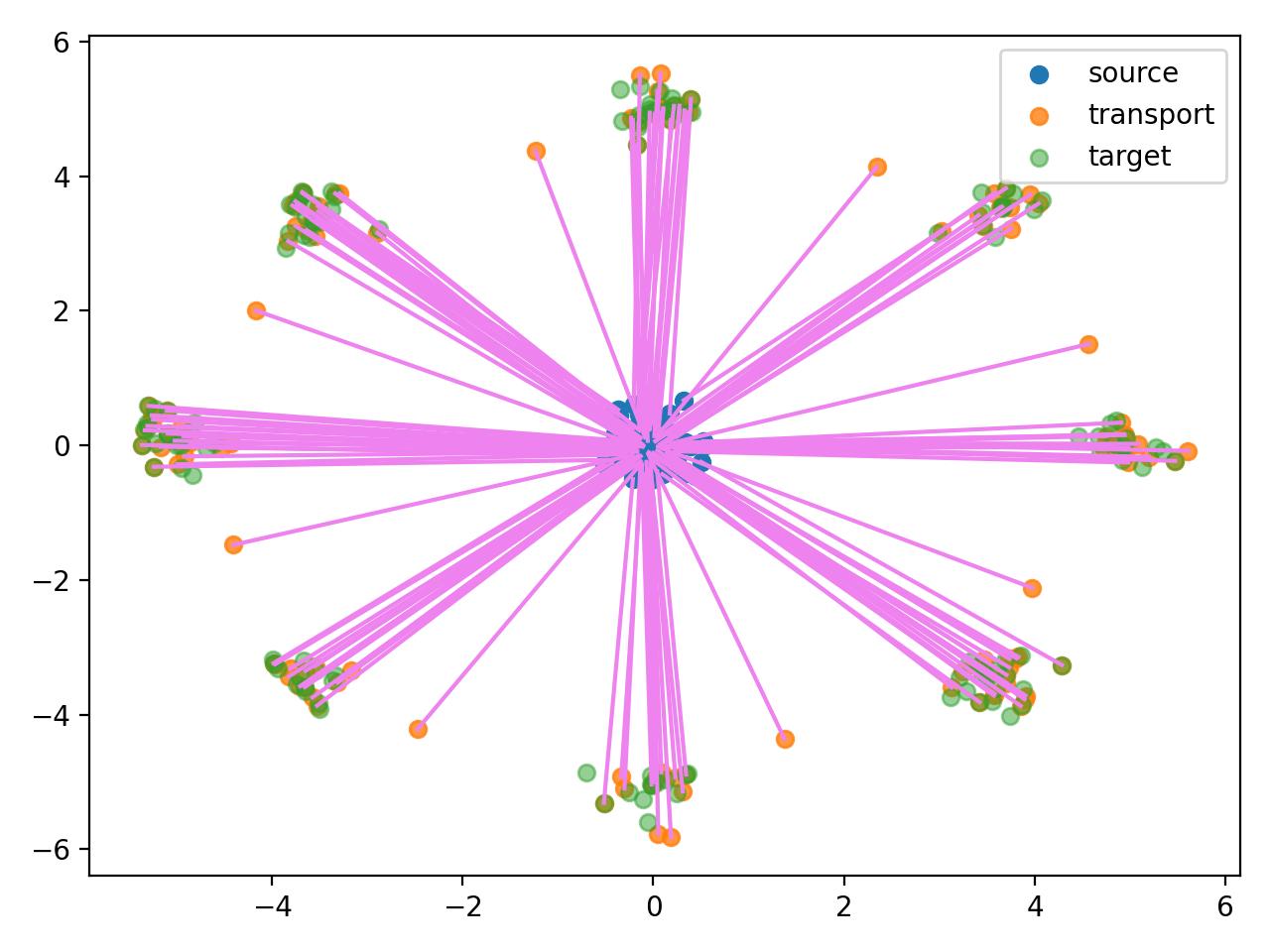}
            \caption*{HOT tol=1E-7}
        \end{subfigure}
        \caption{Comparison on Circle Gaussian dataset.}
    \end{subfigure}
    }
    \fbox{
    \begin{subfigure}[t]{0.2\textwidth}
        \centering
        \begin{subfigure}[t]{\textwidth}
            \centering\includegraphics[width=\textwidth]{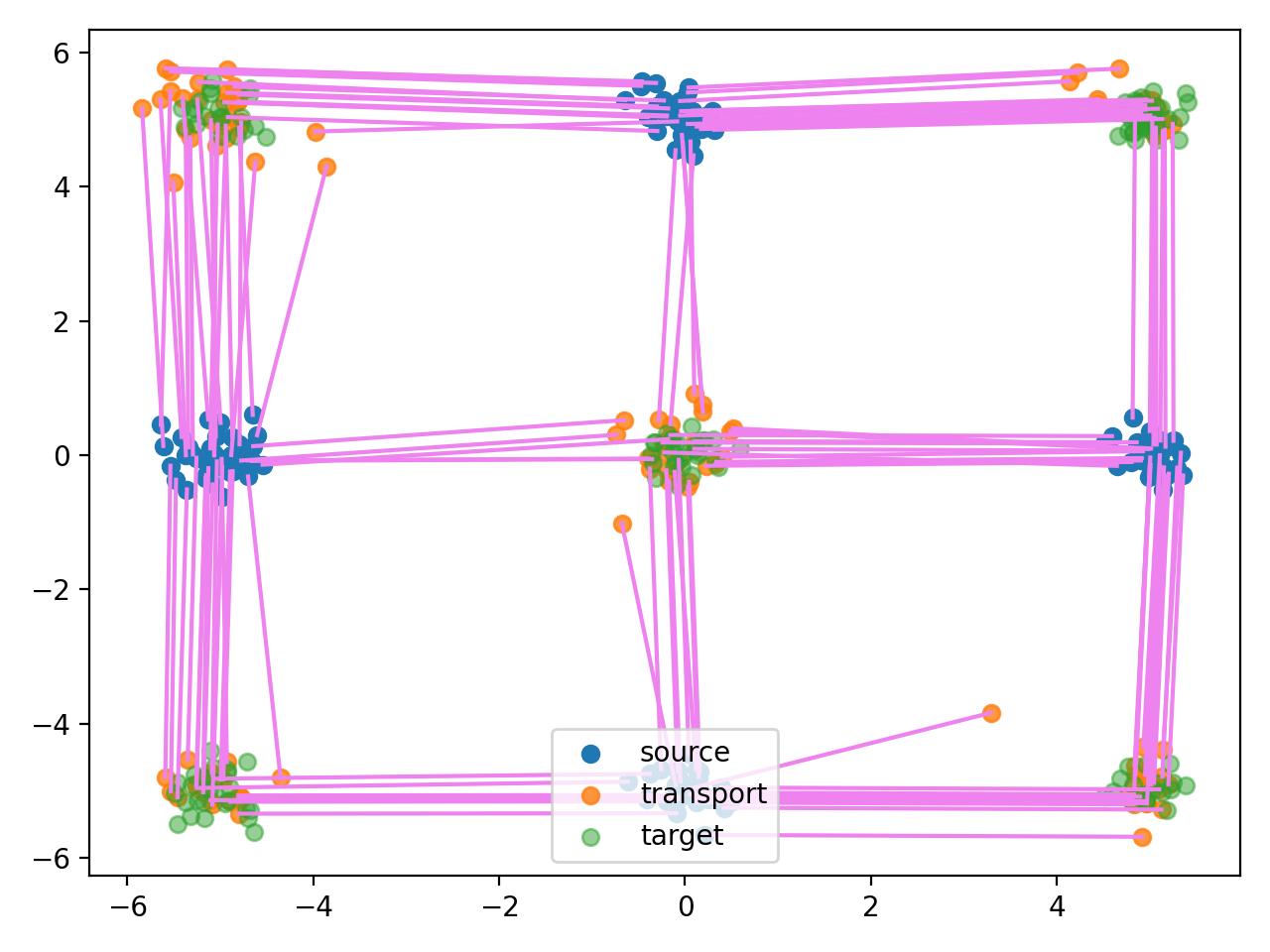}
            \caption*{W2NeuralDual}
        \end{subfigure}\\
        \begin{subfigure}[t]{\textwidth}
            \centering
            \includegraphics[width=\textwidth]{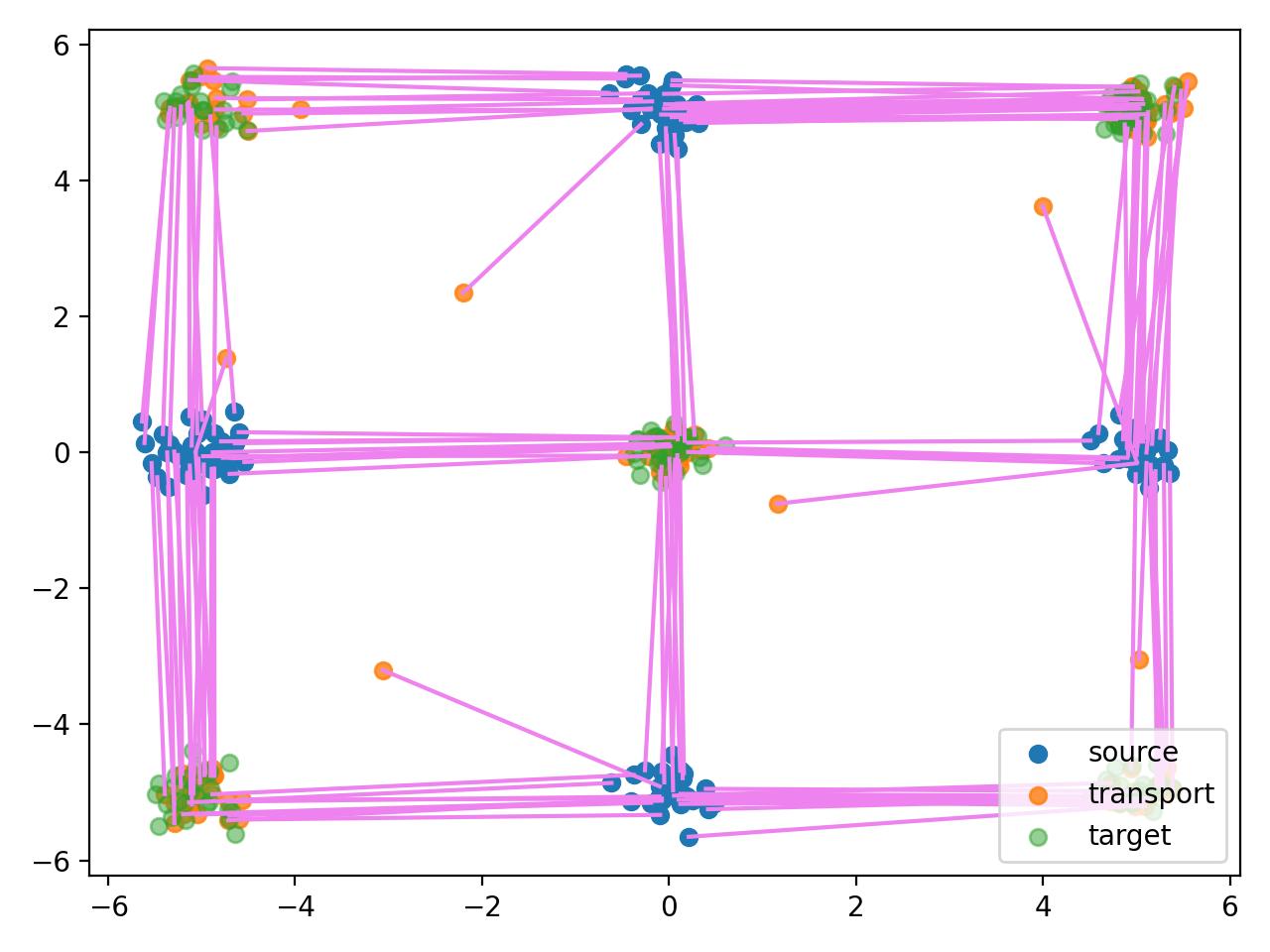}
            \caption*{HOT tol=1E-6}
        \end{subfigure}\\
        \begin{subfigure}[t]{\textwidth}
            \centering
            \includegraphics[width=\textwidth]{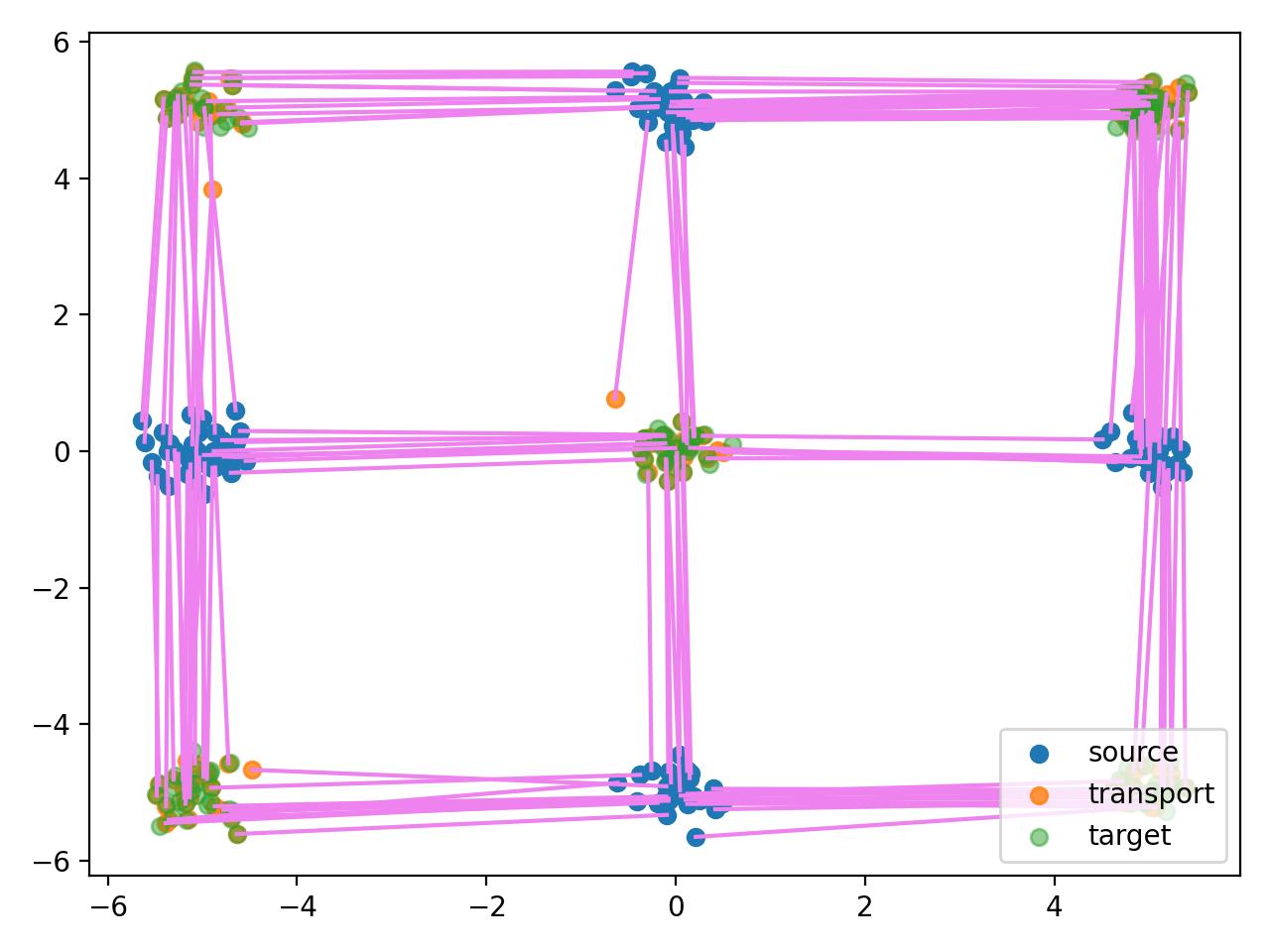}
            \caption*{HOT tol=1E-7}
        \end{subfigure}
        \caption{Comparison on CheckerBoard dataset.}
    \end{subfigure}
    }
    \caption{A visualization of the transport plans on different datasets.}
    \label{fig: comp-JAX-results}
\end{figure}

 \begin{table}[h!]
        \centering
        \resizebox{9cm}{!}{
        \begin{tabular}{cccccc}
        \hline
        Dataset& & \multicolumn{2}{c}{HOT} & \multicolumn{2}{c}{W2NeuralDual}  \\
              &  & tol=1E-6 & tol=1E-7 & training & inference \\ 
        \hline
        \multirow{4}{*}{\shortstack{Circle Gaussian}} & objective & 22.3634 & 22.3601 & $\backslash$ & 22.2784 \\
        & gap       & 1.54E-4 & 1.32E-5 &  $\backslash$  &  3.49E-3  \\
                                         & time(s) & 51.73 & 262.78 & 4466.61 & 5.21 \\ 
                                         & memory(GB) & 2.72 & 2.72 & 0.65 & $\backslash$ \\
        \hline 
        \multirow{4}{*}{\shortstack{CheckerBoard}} & objective & 21.5437 & 21.5348 & $\backslash$ & 23.6818 \\
        & gap  &  4.09E-4 & 1.46E-5 &   $\backslash$ &  9.53E-2  \\
                                 & time(s) & 62.51 & 272.29 & 3029.24 & 3.58 \\ 
                                 & memory(GB) & 2.72 & 2.72 & 0.65 & $\backslash$ \\
        \hline
        \end{tabular} 
        }
        \caption{Numerical results of HOT and W2NeuralDual on the sample dataset.}        \label{tab:gaussianres}     
    \end{table}

\section{Domain adaptation}\label{domain-adaptation}
Optimal transport has a wide range of applications in domain adaptation and data alignment.  
Inspired by \cite{courty2016optimal}, we apply the HOT algorithm for domain adaptation in unsupervised classification. We first sample test data from the target domain and perform optimal transport between the source domain and the sampled test data. We then train a classifier on the transported data, which retains the source domain labels, and evaluate it on the target domain. Again, we apply the non-uniform 2D grids construction method here.
        \begin{figure}[ht!]
            \centering
            \begin{subfigure}[b]{0.24\textwidth}
            \centering
            \includegraphics[width=\textwidth]{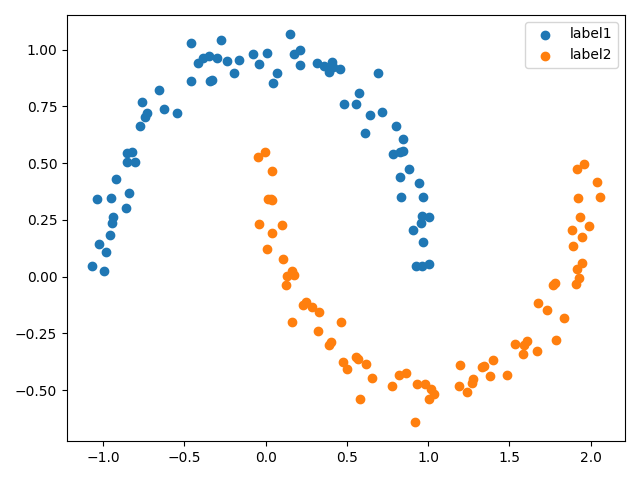}
            \end{subfigure}
            \begin{subfigure}[b]{0.24\textwidth}
            \centering
            \includegraphics[width=\textwidth]{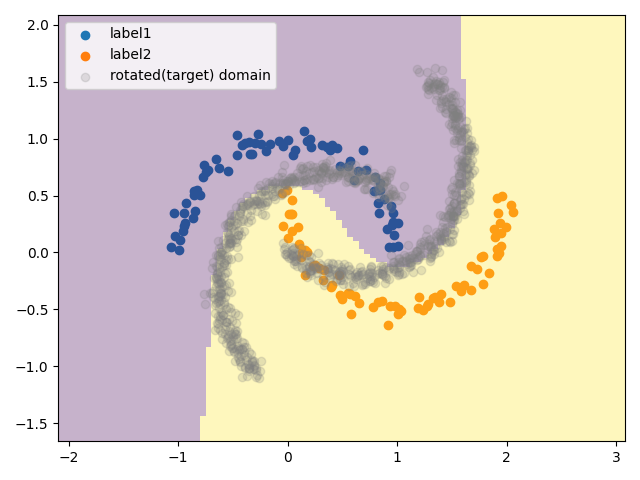}  
            \end{subfigure}
            \begin{subfigure}[b]{0.24\textwidth}
            \centering
            \includegraphics[width=\textwidth]{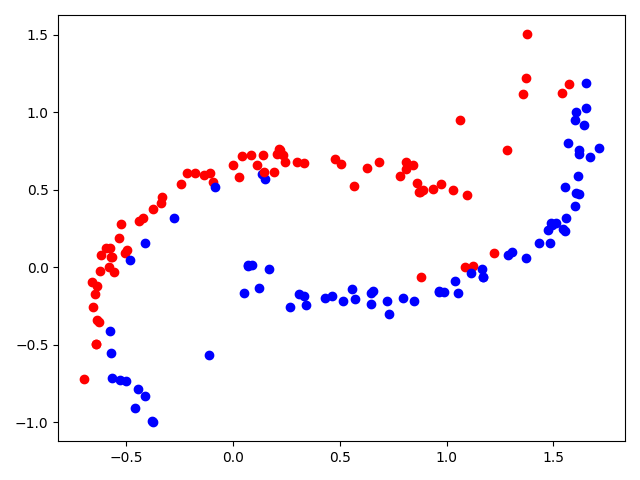}  
            \end{subfigure}
            \begin{subfigure}[b]{0.24\textwidth}
            \centering
            \includegraphics[width=\textwidth]{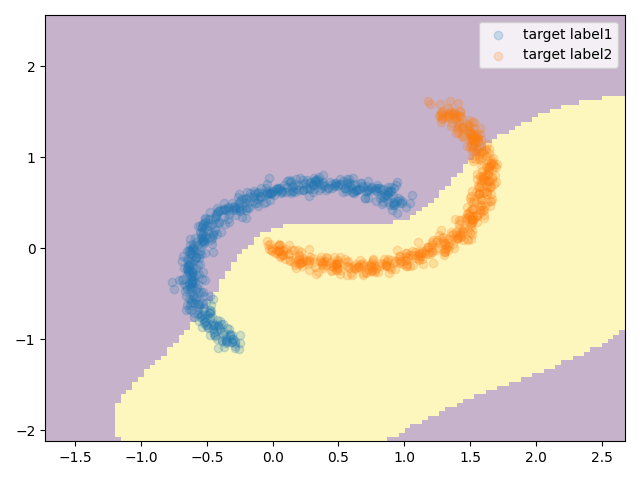}  
            \end{subfigure}
            
            \caption{Optimal transport based domain adaptation for unsupervised classification.}
            \label{fig:otda}
        \end{figure}

From Table \ref{tab:otdaresult}, we can conclude that optimal transport-based domain adaptation significantly increases classification accuracy.

        \begin{table}[h!]
            \centering
            \caption{Influence of optimal transport based domain adaptation on unsupervised classification accuracy.}
            \begin{tabular}{ccccccc}
            \hline
            Rotation Angle & 10$^\circ$ & 20$^\circ$ & 40$^\circ$ & 50$^\circ$ & 70$^\circ$ & 90$^\circ$ \\
            \hline
            SVM     & 100\% & 100\% & 76.5\% & 40.4\% & 26.2\% & 18.7\%                   \\
            \hline
            OTDA-SVM & 100\% & 100\% & 92.6\% & 82.8\% & 68.4\% & 59.0\%                      \\
            \hline
            \end{tabular}
            \label{tab:otdaresult}
        \end{table}

\section{More examples of color transfer} \label{more_application}
We present additional examples of color transfer by the HOT algorithm, as shown in Fig. \ref{fig:color_transfer}.

   \begin{figure*}[!ht]
    \centering
    \includegraphics[width=0.80\textwidth]{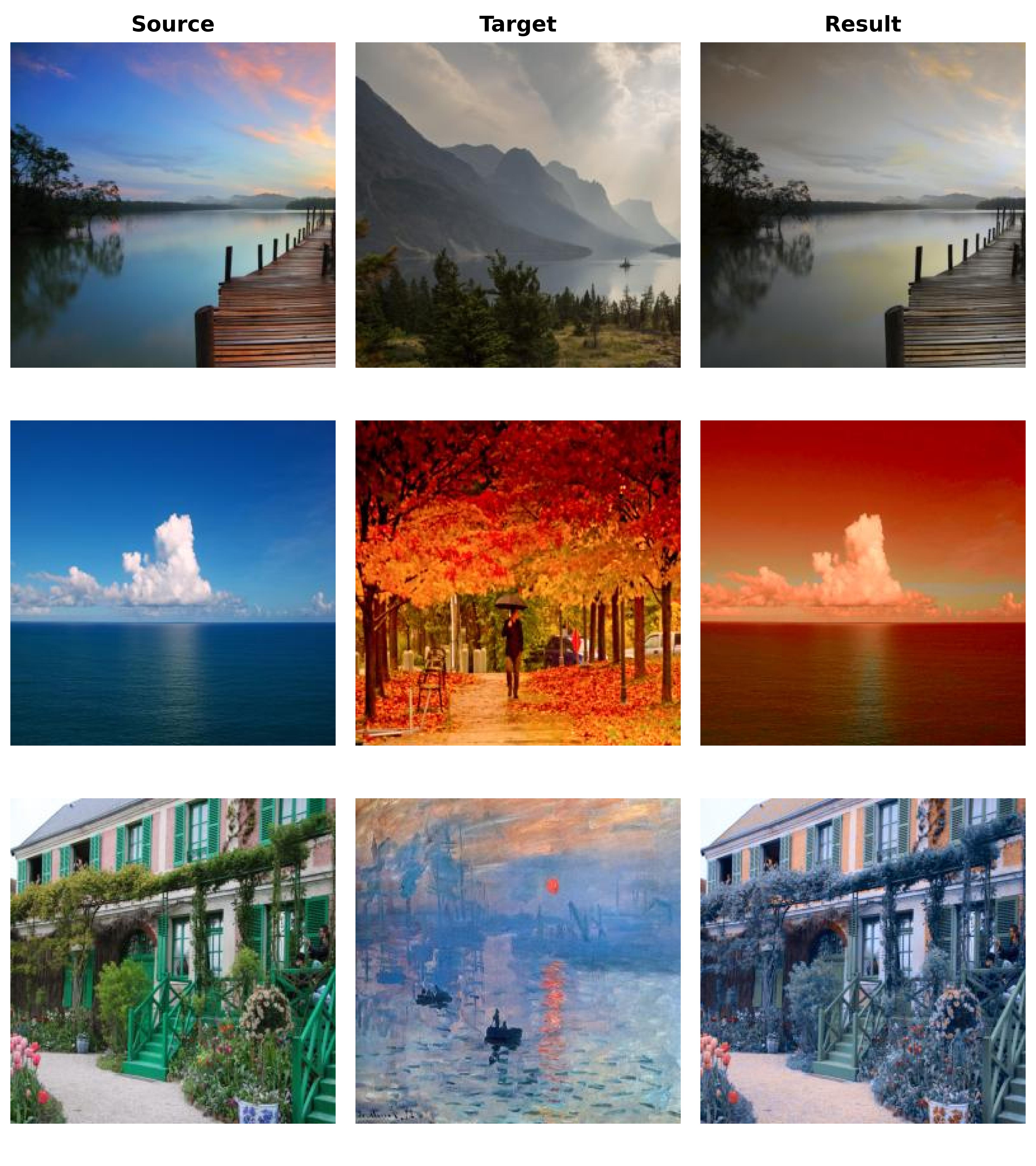}
    \caption{More examples of color transfer.}
    \label{fig:color_transfer}
    \end{figure*}

\putbib
\end{bibunit}

\end{document}